 \newtheorem{theorem}{Theorem}[section]
 \newtheorem{lemma}[theorem]{Lemma}
 \theoremstyle{definition}
 \newtheorem{definition}[theorem]{Definition}
 \theoremstyle{remark}
 \newtheorem{remark}[theorem]{Remark}
 \newtheorem*{example}{Example}
 \numberwithin{equation}{section}
\newcommand{\toto}{\rightrightarrows}
\newcommand{\R}{{\mathbb R}}
\newcommand{\N}{{\mathbb N}}
\newcommand{\To}{\longrightarrow}
\def\1{\^{\i}}
\def\2{\u{a}}
\def\3{\c{s}}
\def\4{\^{a}}
\def\5{\c{t}}
\def\a{\alpha}
\def\e{\epsilon}
\def\l{\lambda}
\def\<{\langle}
\def\>{\rangle}
\DeclareMathOperator*\cl{cl}
\DeclareMathOperator*\co{co}
\DeclareMathOperator*\inte{int}
\begin{document}

%
%
%
%
%
%
%
%
%

\title[Vector  Equilibrium Problems on Dense Sets]
 {Vector  Equilibrium Problems on Dense Sets}

\author{Szil\'ard L\' aszl\' o}

\address{Department of Mathematics\\ Technical University of Cluj-Napoca\\
              Str. Memorandumului nr. 28, 400114 Cluj-Napoca, Romania.}
              \email{laszlosziszi@yahoo.com}

\thanks{This work was supported by a grant of the Romanian Ministry of Education, CNCS - UEFISCDI, project number PN-II-RU-PD-2012-3 -0166.}

\subjclass{47H04, 47H05, 26B25, 26E25, 90C33}

\keywords{self-segment-dense set, vector equilibrium problem, vector optimization, vector variational inequality}

\begin{abstract}
In this paper we provide sufficient conditions that ensure the existence of the solution of some vector equilibrium  problems in Hausdorff topological vector spaces ordered by a cone.  The conditions that we consider are imposed not on the whole domain of the operators involved, but rather on a self segment-dense subset of it, a special type of dense subset. We apply the results obtained to vector optimization and vector variational inequalities.
\end{abstract}
\maketitle
\section{Introduction}
Equilibrium problems play an important role in nonlinear analysis especially because they provide a unified framework for treating optimization problems, fixed points, saddle points as well as many important problems in physics and mathematical economics, such as location problems or Nash equilibria in game theory. The foundation of (scalar) equilibrium theory has been laid down by Ky Fan \cite{Fan1}, his minimax inequality still being considered one of the most notable results in this field.
 The classical scalar equilibrium problem  \cite{Fan1}, described by a bifunction %
$\varphi :K\times K \longrightarrow {\mathbb{R}}$, consists in finding $x_0\in K$ such
that
\begin{equation*}
\varphi(x_0,y)\ge 0,\,\forall y\in K.
\end{equation*}

We recall the famous existence result of Ky Fan.

\begin{theorem}
\label{tKF} Let $K$ be a nonempty, convex and compact subset of the Hausdorff topological vector space $X$ and let $\varphi
:K \times K \longrightarrow{\mathbb{R}}$ be a bifunction satisfying

\begin{itemize}
\item[(i)] $\forall y\in K$, the function $x\longrightarrow \varphi(x,y)$ is
upper semicontinuous on $K,$

\item[(ii)] $\forall x\in K,$ the function $y\to\varphi(x,y)$ is quasiconvex on $%
K,$

\item[(iii)] $\forall x\in K,\, \varphi(x, x)\ge 0.$

Then, there exists an element $x_0\in K$ such that
\begin{equation*}
\varphi(x_0,y)\ge 0,\,\forall y\in K.
\end{equation*}
\end{itemize}
\end{theorem}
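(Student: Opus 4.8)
The plan is to recast the problem as a question about the nonemptiness of an intersection of closed sets and to settle it with the Knaster--Kuratowski--Mazurkiewicz (KKM) principle. For each $y\in K$ I would introduce the solution set
$$
G(y)=\{x\in K:\varphi(x,y)\ge 0\},
$$
and note that $x_0$ solves the equilibrium problem precisely when $x_0\in\bigcap_{y\in K}G(y)$. Thus the entire task reduces to proving that this intersection is nonempty.

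The first, routine, step is to check that every $G(y)$ is closed: by hypothesis (i) the map $x\mapsto\varphi(x,y)$ is upper semicontinuous, so its superlevel set $\{x\in K:\varphi(x,y)\ge 0\}$ is closed, and being a closed subset of the compact set $K$ it is itself compact. The decisive step is to verify that $G$ is a \emph{KKM map}, that is, $\co\{y_1,\dots,y_n\}\subseteq\bigcup_{i=1}^n G(y_i)$ for every finite family $y_1,\dots,y_n\in K$. Here hypotheses (ii) and (iii) work together: for a convex combination $x=\sum_{i=1}^n\lambda_i y_i$, quasiconvexity of $y\mapsto\varphi(x,y)$ yields
$$
\varphi(x,x)=\varphi\Bigl(x,\sum_{i=1}^n\lambda_i y_i\Bigr)\le\max_{1\le i\le n}\varphi(x,y_i),
$$
while (iii) forces $\varphi(x,x)\ge 0$; hence $\varphi(x,y_i)\ge 0$ for at least one index $i$, i.e.\ $x\in G(y_i)$, which is exactly the KKM covering property.

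With closedness, compactness of one value, and the KKM property established, the conclusion follows from the Fan--KKM theorem: such a family has $\bigcap_{y\in K}G(y)\ne\emptyset$, and any point of this intersection is the desired $x_0$. The genuine obstacle is the KKM theorem itself, which is not a soft byproduct of the hypotheses but a result of topological fixed-point type. To keep the argument self-contained I would prove it as follows: since $K$ is compact, the finite intersection property reduces the problem to showing $\bigcap_{i=1}^n G(y_i)\ne\emptyset$ for every finite family; restricting attention to the finite-dimensional simplex spanned by $y_1,\dots,y_n$, the covering property proved above is precisely the hypothesis of Sperner's lemma (equivalently, of Brouwer's fixed point theorem), which produces a common point. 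Passing back through the finite intersection property then delivers the full intersection, completing the proof.
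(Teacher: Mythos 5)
Your proposal is correct and is essentially the approach the paper itself relies on: the paper recalls Theorem \ref{tKF} without proof (it is Ky Fan's classical result), but the second proof of its main generalization, Theorem \ref{t1}, proceeds exactly as you do — it defines $G(y)=\{x\in K: f(x,y)\not\in-\inte C\}$, shows each $G(y)$ is closed (hence compact in $K$) using the upper semicontinuity hypothesis, verifies the KKM property from the convexity and diagonal hypotheses, and concludes by Ky Fan's intersection lemma (stated as the Lemma in Section 2). Specialized to $Z=\R$, $C=\R_+$, your argument coincides with that proof, your only addition being the optional Sperner/Brouwer sketch of the KKM lemma itself, which the paper instead takes as a known black box.
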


Starting with the pioneering work of Giannessi \cite{G1}, several extensions of the scalar equilibrium problem  to the vector case have been considered. These vector equilibrium problems, much like their scalar counterpart, offer a unified framework for treating vector optimization, vector variational inequalities or cone saddle point problems, to name just a few \cite{A,AKY,AKY1,AOS,Go1,GRTZ}.

Let $X$ and  $Z$ be  locally convex Hausdorff topological vector spaces, let $K\subseteq X$ be a nonempty set and let $C\subseteq Z$ be a  convex and pointed cone.
Assume that the interior of the cone $C$, denoted by $\inte C$, is nonempty and consider the mapping $f:K\times K\To Z.$  The vector equilibrium problem, introduced in \cite{AOS}, consist in finding $x_0\in K$, such that
\begin{equation}\label{p1}
f(x_0,y)\not\in-\inte C,\,\forall y\in K.
\end{equation}
Recall that this problem is called weak vector equilibrium problem \cite{G,G1}.

The following equilibrium problem, called strong vector equilibrium problem \cite{G,G1}, is also a valid extension of the scalar equilibrium problem to vector valued case. In the formulation of the strong vector equilibrium problem we do not assume the nonemptyness of the  interior of the cone $C.$

The strong vector equilibrium problem consists in finding $x_0\in K$, such that
\begin{equation}\label{p2}
f(x_0,y)\not\in- C\setminus\{0\},\,\forall y\in K.
\end{equation}

It can easily be observed, that for $Z=\R$ and $C=\R_+=[0,\infty),$ the previous problems reduce to the classical scalar equilibrium  problem. 
 Note that, if $\inte C\neq\emptyset$ and $x_0\in K$ is a solution of (\ref{p2}), then $x_0$ is also a solution of (\ref{p1}).

In this paper, we obtain some existence results of the solution for the vector equilibrium problem (\ref{p1}), respectively (\ref{p2}). The conditions that we consider are imposed on a  special type of dense subset of $K$, that we call self-segment-dense \cite{LaVi1}. The notion of a self-segment-dense set has been successfully used in the context of scalar and set-valued equilibrium problems  and generalized set-valued monotone operators in \cite{LaVi, LaVi1}. Moreover, we found very useful applications of this notion in economics (a Debreu-Gale-Nikaido-type theorem) and game theory (existence of Nash equilibria).

The paper is organized as follows. In next section, we introduce some preliminary notions and the necessary apparatus that we need in order to obtain our results. In section 3 and section 4 we state our results concerning on weak, respectively strong vector equilibrium problems. Our conditions, which ensure the solution existence of the above mentioned vector equilibrium problems, are  considerably weakening the existing conditions from the literature. More precisely, we assume some continuity, respectively convexity properties of the vector bifunction $f$ involved in problem (\ref{p1}), respectively(\ref{p2}), not on the whole set $K$, but rather on a self-segment-dense subset of it.  Also the diagonal property $f(x,x)\not\in-\inte C,$ respectively $f(x,x)\notin-C\setminus\{0\}$ is assumed on a self-segment-dense set $D\subseteq K$ only. We pay a special attention to the case when $K$ is not necessarily compact but closed, in particular, when $K$ is a closed subset of a reflexive Banach  space. We show that these results fail, if we replace the self-segment-denseness of $D$ by its denseness. Finally, we apply our results to vector optimization and vector variational inequalities.

\section{Preliminaries}

Let $X$ be a real Hausdorff, locally convex topological vector space.  For a non-empty set $D\subseteq X$, we denote by $\inte D$ its interior, by $\cl D$ its closure and by $\mbox{span} D$ the subspace of $X$ generated by $D$.  We say that $P\subseteq D$ is dense in $D$ iff $D\subseteq \cl P$, and that $P\subseteq X$ is closed with respect to $D$ iff $\cl P \cap D=P\cap D.$ Recall that a set $C\subseteq X$ is a cone, iff $tk\in C$ for all $c\in C$   and $t\ge 0.$ The cone $C$ is convex if $C+C=C,$ and pointed if $C\cap (-C)=\{0\}.$ Note that a cone $C$ induce a partial ordering on $Z$, that is $z_1\le z_2\Leftrightarrow z_2-z_1\in C.$ In the sequel when we use $\inte C,$  we tacitly assume that the cone $C$ has nonempty interior. Following the same logical approach, one can introduce the strict inequality  $z_1< z_2\Leftrightarrow z_2-z_1\in \inte C,$ or $z_1< z_2\Leftrightarrow z_2-z_1\in  C\setminus\{0\}.$ These relations lead to  saying, that $z_1\not< z_2\Leftrightarrow z_2-z_1\not\in-\inte C$, or $z_1\not< z_2\Leftrightarrow z_2-z_1\not\in-C\setminus\{0\}.$ It is an easy exercise to show that $C+C\setminus\{0\}=C\setminus\{0\},$ respectively  $\inte C+C= \inte C.$

Let   $Z$ be  another  Hausdorff, locally convex topological vector space, let $K\subseteq X$ be a nonempty set and let $C\subseteq Z$ be a  convex and pointed cone.



A map $f:K\To Z$ is said to be C-upper semicontinuous
at $x\in K$ iff for any neighborhood $V$ of $f(x)$ there exists a neighborhood $U$ of $x$ such that $f(u)\in  V-C$ for all $u\in U\cap K$. Obviously, if $f$ is continuous at $x\in K,$ then it is  also C-upper semicontinuous at $x\in K$.
Assume that $C$ has nonempty interior.  According to \cite{Ta} $f$ is C-upper semicontinuous at $x\in K,$ if and only if, for any $k\in\inte C$, there exists a neighborhood $U$ of $x$ such that
$f(u) \in f(x) + k -\inte C$ for all $u \in U\cap K.$

 Similarly, even if $\inte C=\emptyset$, one can introduce the so called strongly C-upper semicontinuity of $f$ as follows: $f$ is strongly C-upper semicontinuous at $x\in K,$ if and only if, for any $k\in C\setminus\{0\}$, there exists a neighborhood $U$ of $x$ such that
$f(u) \in f(x) + k - C\setminus\{0\}$ for all $u \in U\cap K.$  The map $f:K\To Z$ is said to be C-lower semicontinuous, respectively strongly C-lower semicontinuous at $x\in K$ iff the map $-f$ is C-upper semicontinuous, respectively strongly C-upper semicontinuous at $x\in K.$

\begin{definition}
The function $f : K\to Z$ is called $C$-convex on $K$, iff  for all $x_1,x_2,\ldots, x_n\in K$, $n\in \N$ and $\lambda_i\ge 0,\, i\in\{1,2,\ldots,n\},$ with
$\sum_{i=1}^n \lambda_i=1,$ such that $\sum_{i=1}^n \lambda_i x_i\in K,$ one
has
$$f\left(\sum_{i=1}^n \l_ix_i\right) \le \sum_{i=1}^n \l_i f(x_i).$$
$f$ is said to be $C$-concave on $K$, iff $-f$ is $C$-convex on $K$.
\end{definition}

Note that in these definitions we do not assume that $K$ is convex. We will  use the following notations for the open, respectively
closed, line segments in $X$ with the endpoints $x$ and $y$
\begin{eqnarray*}
]x,y[ &:=&\big\{z\in X:z=x+t(y-x),\,t\in ]0,1[\big\}, \\
\lbrack x,y] &:=&\big\{z\in X:z=x+t(y-x),\,t\in \lbrack 0,1]\big\}.
\end{eqnarray*}
The line segments $]x,y],$ respectively $[x,y[$ are defined similarly.
In \cite{DTL}, Definition 3.4, The Luc has introduced the notion of a so-called \emph{segment-dense} set. Let $V\subseteq X$ be a convex set. One
says that the set $U\subseteq V$ is segment-dense in $V$ if for each $x\in V$ there can be found $y\in U$ such that $x$ is a cluster point of the set $%
[x,y]\cap U.$

In what follows we present a denseness notion (cf. \cite{LaVi,LaVi1}) which is slightly different from the concept of The Luc presented above, but which is better suited for our needs.

\begin{definition}
\label{dd} Consider the sets $U\subseteq V\subseteq X$ and assume that $V$
is convex.
We say that $U$ is self segment-dense in $V$ if $U$ is dense in $V$ and
\begin{equation*}
\forall x,y\in U,\mbox{  the set }\lbrack x,y]\cap U\mbox{  is dense in }%
\lbrack x,y].
\end{equation*}
\end{definition}

\begin{remark}\textrm{ Obviously in one dimension the concepts of a segment-dense set respectively a self
segment-dense set are equivalent to the concept of a dense set. }
\textrm{\ }
\end{remark}

In what follows we provide an essential example of a self segment-dense set.

\begin{example}
\textrm{(\cite{LaVi1}, Example 2.1)\label{EEE} Let $V$ be the two
dimensional Euclidean space ${\mathbb{R}}^2$ and define $U$ to be the set
\begin{equation*}
U :=\{(p,q) \in{\mathbb{R}}^2 : p\in \mathbb{Q},\, q\in\mathbb{Q}\},
\end{equation*}
where $\mathbb{Q}$ denotes the set of all rational numbers. Then, it is
clear that $U$ is dense in ${\mathbb{R}}^2.$ On the other hand $U$ is not
segment-dense in ${\mathbb{R}}^2,$ since for $x=(0, \sqrt{2})\in {\mathbb{R}}%
^2 $ and for every $y=(p,q)\in U$, one has $[x, y] \cap U = \{y\}.$ }

\textrm{It can easily be observed that $U$ is self segment-dense in ${%
\mathbb{R}}^2$, since for every $x,y\in U$ $x=(p,q),\,y=(r,s)$ we have $%
[x,y]\cap U=\{(p+t(r-p),q+t(s-q)): t\in[0,1]\cap\mathbb{Q}\},$ which is
obviously dense in $[x,y].$ }
\end{example}

To further circumscribe the notion of a self segment-dense set we provide an example %
of a subset that is dense but not self segment-dense.

\begin{example}\label{ex1}
\textrm{Let $X$ be an infinite dimensional real Hilbert space, it is known that the unit sphere %
$$\left\{ x\in X:\left\Vert x\right\Vert =1\right\},$$
is dense with respect to the weak topology in the unit ball $\left\{ x\in X:\left\Vert x\right\Vert \leq1 \right\}$, but %
it is obviously not self segment-dense since any segment with endpoints on the sphere does not intersect the sphere in any other points.}
\end{example}

\begin{remark}
\textrm{Note that every dense convex subset of a Banach space is self
segment-dense. In particular dense subspaces and dense affine subsets are
self segment-dense. }
\end{remark}

In subsequent section, the notion of a KKM map and the well-known intersection Lemma due to Ky Fan %
\cite{Fan} will be needed.

\begin{definition}
(Knaster-Kuratowski-Mazurkiewicz) Let $X$ be a Hausdorff topological vector
space and let $M\subseteq X.$ The application $G:M\rightrightarrows X$ is
called a KKM application if for every finite number of elements $%
x_1,x_2,\dots,x_n\in M$ one has $$\co\{x_1,x_2,\ldots,x_n\}\subseteq %
\displaystyle\bigcup_{i=1}^n G(x_i).$$
\end{definition}

\begin{lemma}
Let $X$ be a Hausdorff topological vector space, $M\subseteq X$ and $%
G:M\rightrightarrows X$ be a KKM application. If $G(x)$ is closed for every $%
x\in M$, and there exists $x_{0}\in M,$ such that $G(x_{0})$ is compact,
then
\begin{equation*}
\bigcap_{x\in M}G(x)\neq \emptyset .
\end{equation*}
\end{lemma}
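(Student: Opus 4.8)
The plan is to prove the statement by the classical reduction to a finite intersection property, combined with the finite-dimensional KKM lemma. First I would exploit the topological hypotheses: every $G(x)$ is closed and $G(x_0)$ is compact. For each $x\in M$ the set $G(x)\cap G(x_0)$ is then a closed subset of the compact space $G(x_0)$, and in a compact topological space a family of closed sets with the finite intersection property has nonempty total intersection. Hence it suffices to show that for every finite collection $x_1,\dots,x_n\in M$ one has $\bigcap_{i=1}^n G(x_i)\neq\emptyset$; we may always adjoin $x_0$ to such a collection so that the resulting intersection lies inside the compact set $G(x_0)$.

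Next I would reduce this finite intersection claim to a genuinely finite-dimensional statement. Fix $x_1,\dots,x_n\in M$ and set $\Delta:=\co\{x_1,\dots,x_n\}$; this convex hull lies in the finite-dimensional affine subspace generated by the $x_i$, so it is affinely homeomorphic to a simplex in a Euclidean space and in particular compact. Put $F_i:=G(x_i)\cap\Delta$, which is closed in $\Delta$. Reading the KKM property of $G$ on subsets of indices gives $\co\{x_i:i\in I\}\subseteq\bigcup_{i\in I}F_i$ for every $I\subseteq\{1,\dots,n\}$. Thus it is enough to prove $\bigcap_{i=1}^n F_i\neq\emptyset$.

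The main work, and the step I expect to be the genuine obstacle, is this finite-dimensional KKM lemma, which I would establish through Sperner's lemma. Writing each $p\in\Delta$ in barycentric coordinates $p=\sum_i\lambda_i x_i$ and letting its support be $\{i:\lambda_i>0\}$, the covering property guarantees that $p$ belongs to $F_i$ for some index $i$ in its support; choosing such an index as a label $\ell(p)$ yields an admissible Sperner labeling of any simplicial subdivision of $\Delta$. Sperner's lemma then furnishes, in each subdivision, a fully labeled small simplex, i.e.\ one whose vertices carry all $n$ labels, so that it has a vertex in each $F_i$. Taking a sequence of subdivisions whose mesh tends to zero, and using the compactness of $\Delta$, I would pass to a convergent subsequence of these small simplices; as the mesh vanishes, all their vertices converge to a single point $\bar x\in\Delta$. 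Since for each $i$ the closed set $F_i$ contains a sequence of vertices converging to $\bar x$, we get $\bar x\in F_i$ for all $i$, hence $\bar x\in\bigcap_{i=1}^n F_i$. This closes the finite intersection argument, and together with the compactness reduction of the first step it gives $\bigcap_{x\in M}G(x)\neq\emptyset$.
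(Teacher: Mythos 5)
The paper never proves this lemma at all: it is Ky Fan's 1961 intersection lemma, quoted as a known tool with a citation to \cite{Fan}, so there is no internal proof to compare yours against. What you have written is the classical self-contained proof of Fan's lemma, and its architecture is sound: the reduction via the finite intersection property (each $G(x)\cap G(x_0)$ is a closed subset of the compact set $G(x_0)$, so it suffices that all finite intersections, with $x_0$ adjoined, are nonempty) is exactly right, and so is the reduction of that finite claim to a finite-dimensional KKM statement.

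There is, however, one step that fails as written. You assert that $\Delta=\co\{x_1,\dots,x_n\}$ is affinely homeomorphic to a simplex and then run barycentric coordinates and Sperner's lemma on $\Delta$ itself. This is valid only when $x_1,\dots,x_n$ are affinely independent, and nothing in the hypotheses guarantees that: the $x_i$ may be affinely dependent or even repeated. If, for instance, four of the points are the vertices of a planar square, then $\Delta$ is two-dimensional, barycentric coordinates with respect to $x_1,\dots,x_4$ are not unique (so your labeling rule is not well defined), and Sperner's lemma in the form you invoke --- a cell of the subdivision carrying all $n$ labels --- is simply unavailable on a complex of dimension smaller than $n-1$. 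The standard repair is to transport the problem to the standard simplex instead of working on $\Delta$: let $S=\{\lambda\in\R^n:\lambda_i\ge 0,\ \sum_{i=1}^n\lambda_i=1\}$, define $\pi:S\To X$ by $\pi(\lambda)=\sum_{i=1}^n\lambda_i x_i$, and put $C_i=\pi^{-1}\big(G(x_i)\big)$. Each $C_i$ is closed in $S$ because $\pi$ is continuous and $G(x_i)$ is closed, and the KKM property of $G$ translates into the covering condition on faces of $S$: if $\lambda$ lies in the face spanned by $\{e_i:i\in I\}$, then $\pi(\lambda)\in\co\{x_i:i\in I\}\subseteq\bigcup_{i\in I}G(x_i)$, hence $\lambda\in\bigcup_{i\in I}C_i$. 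Now $S$ is a genuine $(n-1)$-simplex, your Sperner-plus-vanishing-mesh argument applies verbatim to $C_1,\dots,C_n$ and produces $\lambda^*\in\bigcap_{i=1}^n C_i$, whence $\pi(\lambda^*)\in\bigcap_{i=1}^n G(x_i)\cap\Delta$. Note also that compactness of $\Delta$, which you deduced from the false simplex identification, is still true for the correct reason: $\Delta=\pi(S)$ is the continuous image of a compact set. With this modification your proof is complete and correct.
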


The proof of the results obtained in the next sections are based on the following lemma from \cite{LaVi1}.
\begin{lemma}
\label{l31} Let $X$ be a Hausdorff locally convex topological vector space, let $V\subseteq X$ be a convex set and let $U\subseteq V$ a self-segment-dense set in $V.$ Then for all finite subset $$\{u_1,u_2,\ldots,u_n\}\subseteq U\mbox{ one has }$$
$$\cl(\co\{u_1,u_2,\ldots,u_n\}\cap U)=\co%
\{u_1,u_2,\ldots,u_n\}.$$
\end{lemma}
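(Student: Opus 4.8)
The plan is to prove the two inclusions separately. The inclusion $\cl(\co\{u_1,\ldots,u_n\}\cap U)\subseteq\co\{u_1,\ldots,u_n\}$ is the routine one: the convex hull of finitely many points is the continuous image of the standard simplex under $(\lambda_1,\ldots,\lambda_n)\mapsto\sum_i\lambda_i u_i$, hence compact and, $X$ being Hausdorff, closed; since $\co\{u_1,\ldots,u_n\}\cap U$ sits inside this closed set, so does its closure. The whole content of the lemma is therefore the reverse inclusion, i.e.\ that $\co\{u_1,\ldots,u_n\}\cap U$ is dense in $\co\{u_1,\ldots,u_n\}$, and I would establish this by induction on $n$.

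For $n=1$ the claim is trivial since $u_1\in U$. So suppose the statement holds for any $n-1$ points of $U$, write $P:=\co\{u_1,\ldots,u_n\}$ and $Q:=\co\{u_1,\ldots,u_{n-1}\}$, and fix an arbitrary $x\in P$ together with an arbitrary neighborhood $W$ of $x$; the goal is to produce a point of $P\cap U$ inside $W$. Writing $x=\sum_{i=1}^n\lambda_i u_i$ with $\lambda_i\ge 0$ and $\sum_i\lambda_i=1$, the case $\lambda_n=1$ gives $x=u_n\in P\cap U$ directly, so I assume $\lambda_n<1$ and set $y:=\sum_{i=1}^{n-1}\frac{\lambda_i}{1-\lambda_n}u_i\in Q$, so that $x=(1-\lambda_n)y+\lambda_n u_n$ lies on the segment $[y,u_n]$.

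The approximation is then carried out in two stages, and this is where local convexity enters. Choose a convex balanced neighborhood $N$ of $0$ with $x+N\subseteq W$ and a neighborhood $M$ of $0$ with $M+M\subseteq N$. The affine map $g(w):=(1-\lambda_n)w+\lambda_n u_n$ is continuous and satisfies $g(y)=x$, so there is a neighborhood $N_1$ of $y$ with $g(N_1)\subseteq x+M$; by the induction hypothesis $y\in Q=\cl(Q\cap U)$, hence I may pick $y'\in N_1\cap Q\cap U$, and then $g(y')\in x+M$. Now $y',u_n\in U$, so self segment-denseness gives that $[y',u_n]\cap U$ is dense in $[y',u_n]$; since $g(y')\in[y',u_n]$, the neighborhood $g(y')+M$ of $g(y')$ meets $[y',u_n]\cap U$ in some point $z$. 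Then $z\in g(y')+M\subseteq x+M+M\subseteq x+N\subseteq W$, while $z\in[y',u_n]\subseteq P$ (by convexity of $P$, as $y'\in Q\subseteq P$ and $u_n\in P$) and $z\in U$, so $z\in W\cap P\cap U$, which completes the induction.

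The main obstacle is that $X$ is only assumed to be a locally convex Hausdorff topological vector space, so one cannot argue with sequences and must instead control the two successive approximations — first of the ``base point'' $y$ in $Q$, then of $x$ along the segment $[y',u_n]$ — purely through neighborhoods of $0$; the splitting $M+M\subseteq N$ is exactly what lets the two error terms combine inside the prescribed neighborhood $W$. Everything else (closedness of the finite convex hull, and the reduction of $x$ to a point of the segment $[y,u_n]$) is routine.
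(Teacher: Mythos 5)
Your proof is correct. Note that this paper does not actually prove the lemma---it imports it from \cite{LaVi1}---and the argument given there is essentially yours: induction on $n$, writing $x=(1-\lambda_n)y+\lambda_n u_n$, approximating the base point $y$ via the induction hypothesis and then moving along the segment $[y',u_n]$ using self-segment-density, with the two errors controlled by $M+M\subseteq N$. One small remark: local convexity is never really needed---the convexity and balancedness of $N$ play no role in your argument, so it goes through in any Hausdorff topological vector space; the hypothesis is simply part of the standing assumptions of the paper.
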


Let us emphasize that this result does not remain valid in case we replace the self-segment-denseness of $U$ in $V$, by its denseness in $V,$ as the next example shows.
\begin{example} Let $V$ be the closed unit ball of an infinite dimensional Banach space $X$, and let $x,y\in V, \, x\neq y.$ Moreover, consider $u,v\in ]x,y[,$ $u=x+t_1(y-x),$ $v=x+t_2(y-x),$ with $t_1,t_2\in]0,1[,\,t_1<t_2.$ Then obviously $U=V\setminus [u,v]$ is dense in $V$, but not self-segment-dense, since for $x,y\in U$ the set  $[x,y]\cap U=[x,u[\,\cup\, ]v,y]$ is not dense in $[x,y].$ This also shows, that $\cl(\co\{x,y\}\cap U)\neq\co\{x,y\}.$
\end{example}


\section{Self Segment-Dense Sets and the Weak Vector Equilibrium Problem}

In this section, we obtain some sufficient conditions that ensure the existence of a solution for the weak vector equilibrium problem (\ref{p1}). The conditions, that we consider, are imposed not on the whole domain of the vector bifunction $f$, but rather on a self-segment-dense subset of it. We also show, that the  self-segment-dense property of this set is essential in obtaining our results, and cannot be replaced by the usually denseness property.  Lemma \ref{l31} plays an important role in the proofs of our results. We treat both the cases when the set $K$, the domain of the vector bifunction $f$, is compact, respectively closed. We pay a special attention to the case when the $K$ is a closed subset of reflexive Banach space $X.$
The main result of this section is the following.

\begin{theorem}
\label{t1} Let $X$ and $Z$ be  Hausdorff, locally convex topological vector spaces,  let $C\subseteq Z$ be a convex,  pointed cone with nonempty interior and
let $K$ be a nonempty, convex and compact subset of $X$. Let $D\subseteq K$ be a
self segment-dense set and consider the mapping $f :K \times K \longrightarrow{Z}$  satisfying

\begin{itemize}
\item[(i)] $\forall y\in D,$ the mapping $x\longrightarrow f(x,y)$ is
C-upper semicontinuous on $K,$

\item[(ii)] $\forall x\in K,$ the mapping $y\longrightarrow f(x, y)$ is C-upper
semicontinuous on $K\setminus D$,

\item[(iii)] $\forall x\in D,$ the mapping $y\longrightarrow f(x,y)$ is C-convex on $%
D,$

\item[(iv)] $\forall x\in D,\, f(x, x)\not\in-\inte  C.$
\end{itemize}

Then, there exists an element $x_0\in K$ such that
\begin{equation*}
f(x_0,y)\not\in -\inte{C},\,\forall y\in K.
\end{equation*}
\end{theorem}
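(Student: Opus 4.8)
The plan is to run the classical Ky Fan / KKM machinery, but with the set-valued map defined only over the self segment-dense set $D$, using Lemma \ref{l31} to recover the KKM property on full convex hulls. For each $y\in D$ I would set
$$G(y):=\{x\in K: f(x,y)\notin-\inte C\}.$$
First I would record an \emph{openness} fact: if $g\colon K\to Z$ is $C$-upper semicontinuous at a point $x_0$ with $g(x_0)\in-\inte C$, then $g\in-\inte C$ on a whole $K$-neighborhood of $x_0$. Indeed, writing $g(x_0)=-c_0$ with $c_0\in\inte C$ and choosing $k\in\inte C$ small enough that $c_0-k\in\inte C$ (possible since $\inte C$ is open and $tk_0\in\inte C$ for small $t>0$), the characterization of $C$-upper semicontinuity from \cite{Ta} yields a neighborhood $U$ with $g(u)\in g(x_0)+k-\inte C\subseteq-\inte C$, using $\inte C+\inte C\subseteq\inte C$. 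Applying this with $g=f(\cdot,y)$ and hypothesis (i) shows each $G(y)$ is a closed subset of the compact $K$, hence compact (and closed in $X$).

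The core step is to show that $G$ is a KKM map on $D$. Fix $y_1,\dots,y_n\in D$ and first take $x\in D\cap\co\{y_1,\dots,y_n\}$, say $x=\sum\lambda_i y_i$. If $f(x,y_i)\in-\inte C$ for every $i$, then the convex combination $\sum\lambda_i f(x,y_i)$ again lies in $-\inte C$ (some $\lambda_i>0$); combining with the $C$-convexity inequality $\sum\lambda_i f(x,y_i)-f(x,x)\in C$ from (iii), legitimate because $x=\sum\lambda_i y_i\in D$, gives $f(x,x)\in-\inte C-C=-\inte C$, contradicting (iv). Hence $\co\{y_1,\dots,y_n\}\cap D\subseteq\bigcup_i G(y_i)$. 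Now comes the decisive use of self segment-denseness: since each $G(y_i)$ is closed, so is $\bigcup_i G(y_i)$, and Lemma \ref{l31} gives $\co\{y_1,\dots,y_n\}=\cl\big(\co\{y_1,\dots,y_n\}\cap D\big)\subseteq\bigcup_i G(y_i)$. Thus $G$ is a KKM map with compact values, and Fan's intersection lemma produces $x_0\in\bigcap_{y\in D}G(y)$, i.e. $f(x_0,y)\notin-\inte C$ for all $y\in D$.

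Finally I would upgrade this from $D$ to $K$ using density and hypothesis (ii). Suppose $f(x_0,y)\in-\inte C$ for some $y\in K\setminus D$. Since $y\mapsto f(x_0,y)$ is $C$-upper semicontinuous at $y$, the openness fact above supplies a $K$-neighborhood $U$ of $y$ on which $f(x_0,\cdot)\in-\inte C$; as $D$ is dense in $K$, some $d\in D\cap U$ then satisfies $f(x_0,d)\in-\inte C$, contradicting what was just established on $D$. Hence $f(x_0,y)\notin-\inte C$ for all $y\in K$, which is the claim.

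I expect the main obstacle to be precisely the KKM verification: the $C$-convexity and diagonal hypotheses are available only at points of the non-convex set $D$, so the naive KKM inclusion can only be checked on $D\cap\co\{y_1,\dots,y_n\}$. Bridging the gap to the full convex hull is exactly what the self segment-dense structure—through Lemma \ref{l31} together with the closedness of the $G(y_i)$—is designed to do, and getting the topological bookkeeping right (compactness of the $G(y)$ and the concluding density argument on $K\setminus D$) is where care is needed.
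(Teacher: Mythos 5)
Your proposal is correct and coincides, step for step, with the paper's second proof of Theorem \ref{t1}: the same map $G(y)=\{x\in K: f(x,y)\notin-\inte C\}$, closedness of its values via $C$-upper semicontinuity, the KKM property verified first on $\co\{y_1,\dots,y_n\}\cap D$ via (iii)--(iv) and then upgraded to the full hull by Lemma \ref{l31} plus closedness, Ky Fan's intersection lemma, and the final density/(ii) argument extending the conclusion from $D$ to $K$. The only cosmetic difference is your choice of a ``small'' $k\in\inte C$ in the openness fact, where the paper simply takes $k=-f(x_0,y_0)$; both are valid.
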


\begin{proof} We give two different proofs.

\textbf{I.} Assume the contrary, that is, for every $x\in K$ there exists $y\in K$ such that $f(x,y)\in-\inte C.$ Then, for every $y\in K$ consider $V_y=\{x\in K: f(x,y)\in-\inte C\}.$ It is obvious that $K\subseteq \cup_{y\in K} V_y.$ We show that $(V_y)_{y\in D}$ is an open cover of $K.$ First of all observe that for all $y\in D,$ one has  $V_y=K\setminus G(y)$, where $G(y)$ is the set $\{x\in K:f(x,y)\not\in-\inte C\}.$ We show that $G(y)$ is closed for all $y\in D.$ Indeed, for fixed $y_0\in D$ consider the net $(x_\a)\subseteq G(y_0)$ and let $\lim x_\a=x_0.$ Assume that $x_0\not\in G(y_0).$ Then $f(x_0,y_0)\in-\inte C$.
 According to (i) the function $x\longrightarrow f(x,y_0)$ is
C-upper semicontinuous at $x_0$, hence  for every $k\in\inte C$ there exists $U,$ a neighborhood of $x_0,$ such that $f(x,y_0)\in f(x_0,y_0)+k-\inte C$ for all $x\in U.$ But then for $k=-f(x_0,y_0)\in\inte C$  one obtains that there exits $\a_0$ such that $f(x_\a,y_0)\in -\inte C,$ for $\a\ge\a_0,$ which contradicts the fact that $(x_\a)\subseteq G(y_0)$. 
Consequently, $V_y$ is open for every $y\in D.$

Assume now that there exists $x_0\in K$ such that $x_0\not\in \cup_{y\in D} V_y.$ Then $f(x_0,y)\not\in-\inte C$, for all $y\in D.$ We show that  $f(x_0,y)\not\in-\inte C$, for all $y\in K.$ Indeed, for $y_0\in K\setminus D,$ by the denseness of $D$ in $K,$ we have that  for every neighborhood $U$ of $y_0$ there exists a $u_0\in U\cap D.$  At this point, the assumption $(ii)$, the upper semicontinuity of $f(x_0, y)$ on $K\setminus D$, assures that for all $k\in\inte C$ there exists a neighborhood $U$ of $y_0$ such that $ f(x_0,u)\in f(x_0,y_0)+k-\inte C.$ Assume that $f(x_0,y_0)\in -\inte C.$ Then let $k=-f(x_0,y_0)\in\inte C.$ Thus, we have that there exists a neighborhood $U$ of $y_0,$ such that $ f(x_0,u)\in -\inte C.$ But, by choosing $u_0\in U\cap D$ we get that $f(x_0,u_0)\in -\inte C,$ contradiction. Hence, $f(x_0,y)\not\in-\inte C$, for all $y\in K$, which contradicts our assumption, that for every $x\in K$ there exists $y\in K,$ such that $f(x,y)\in-\inte C.$

Consequently, $(V_y)_{y\in D}$ is an open cover of the compact set $K$, in conclusion it contains a finite subcover. In other words, there exists $y_1,y_2,...,y_n\in D$ such that $K\subseteq \cup_{i=1}^n V_{y_i}.$ Consider $\big(p_i\big)_{i=\overline{1,n}}$  a continuous partition of  unity associated to the open cover $\big(V_{y_i}\big)_{i=\overline{1,n}}$. Then $p_i:K\To[0,1]$ is continuous and $\mbox{supp}(p_i)=\cl\{x\in K: p_i(x)\neq 0\}\subseteq V_{y_i}$ for all $i\in\{1,2,...,n\}$, moreover $\sum_{i=1}^n p_i(x)=1,$ for all $x\in K.$

Consider the mapping $\varphi:\co\{y_1,y_2,...,y_n\}\To\co\{y_1,y_2,...,y_n\},$
$$\varphi(x)=\sum_{i=1}^n p_i(x)y_i.$$
Obviously $\varphi$ is continuous, and $\co\{y_1,y_2,...,y_n\}$ is a compact and convex subset of the  finite dimensional  space $\mbox{span}\{y_1,y_2,...,y_n\}.$ Hence, by the Brouwer fixed point theorem, there exists $x_0\in \co\{y_1,y_2,...,y_n\}$ such that $\varphi(x_0)=x_0.$

Let $J=\{i\in\{1,2,...,n\}:p_i(x_0)>0\}.$ Obviously $J$ is nonempty, since $\sum_{i\in J} p_i(x_0)=1,$ and $$\varphi(x_0)=\sum_{i=1}^n p_i(x_0)y_i=\sum_{i\in J}p_i(x_0)y_i=x_0.$$
The latter  equality shows, that $x_0\in\co\{y_i:i\in J\}.$ On the other hand, from $p_i(x_0)>0$ for all $i\in J$ we obtain that $x_0\in\cap_{i\in J}V_{y_i}.$ Since $\cap_{i\in J}V_{y_i}$ is open, we  obtain  that there exists $U$ an open and convex neighbourhood of $x_0$ such that $U\subseteq \cap_{i\in J}V_{y_i}.$ Here we find very useful the conclusion of Lemma \ref{l31}. Indeed, since $\co\{y_i:i\in J\}\cap U\neq\emptyset$, according to Lemma \ref{l31}, we have that there exists $y_0\in \co\{y_i:i\in J\}\cap U\cap D.$
Hence, we  have $y_0=\sum_{i\in J}\l_i y_i\in \co\{y_i:i\in J\}\cap U\cap D$, where $\l_i\ge0$ for all $i\in J$ and $\sum_{i\in J}\l_i=1$. By (iv), in the hypothesis of the theorem, one gets $f(y_0,y_0)\not\in-\inte C.$ On the other hand, by using (iii) we get $$f(y_0,y_0) =f(y_0,\sum_{i\in J}\l_i y_i)\le\sum_{i\in J}\l_i f(y_0,y_i),$$ which shows that $\sum_{i\in J}\l_i f(y_0,y_i)-f(y_0,y_0)\in C.$ But, $y_0\in U$, thus $f(y_0,y_i)\in-\inte C,$ for all $i\in J.$ Hence $\sum_{i\in J}\l_i f(y_0,y_i)\in -\inte C,$ which leads to $$-f(y_0,y_0)\in C-\sum_{i\in J}\l_i f(y_0,y_i)\subseteq\inte C,$$ contradiction.
\qed

\textbf{II.} The second proof is based on Ky Fan's Lemma. We consider the set-valued map $$G:D\toto K,\,\, G(y)=\{x\in K:f(x,y)\not\in-\inte C\}.$$ We have shown in the first part of the proof, that $G(y)$ is closed for all $y\in D.$ 
 Since $K$ is compact, we have that $G(y)\subseteq K,$ is compact for all $y\in D.$
We show next, that $G$ is a  KKM mapping. We claim that for all $y_1,y_2,...,y_n\in D$ one has $\co\{y_1,y_2,...,y_n\}\cap D\subseteq \bigcup_{i=1}^n G(y_i).$ Indeed, assume that there exist $y_1,y_2,...,y_n\in D$ and  $y\in \co\{y_1,y_2,...,y_n\}\cap D,$ such that $y\not\in \bigcup_{i=1}^n G(y_i).$ Hence, there exist $\l_1,\l_2,\ldots,\l_n\ge 0,\,\sum_{i=1}^n\l_i=1$ such that $\sum_{i=1}^n\l_i y_i\in D$ and $$\sum_{i=1}^n\l_i y_i\not\in\bigcup_{i=1}^n G(y_i).$$
This is equivalent with $f(\sum_{i=1}^n\l_i y_i,y_i)\in-\inte C,\,\mbox{for all } i\in\{1,2,\ldots,n\},$ and hence, by the convexity of $-\inte C$ we have that
$$\sum_{i=1}^n \l_if\left(\sum_{i=1}^n\l_i y_i,y_i\right)\in-\inte C.$$
From assumption $(iii)$,  we have that $$\sum_{i=1}^n \l_i f\left(\sum_{i=1}^n\l_i y_i,y_i\right)- f\left(\sum_{i=1}^n\l_i y_i, \sum_{i=1}^n\l_i y_i\right )\in C,$$ or equivalently,  $$f\left(\sum_{i=1}^n\l_i y_i, \sum_{i=1}^n\l_i y_i\right )\in\sum_{i=1}^n \l_i f\left(\sum_{i=1}^n\l_i y_i,y_i\right)- C\subseteq -\inte C,$$  which contradicts (iv).
Consequently, $$\co\{y_1,y_2,\ldots,y_n\}\cap D\subseteq\bigcup_{i=1}^n G(y_i),$$ holds true, and leads to
 $$\cl(\co\{y_1,y_2,\ldots,y_n\}\cap D)\subseteq\cl\left(\bigcup_{i=1}^n G(y_i)\right).$$

Furthermore, since $G(y_i)$ is closed for all $i\in\{1,2,\ldots,n\}$ we have
$$\cl\left(\bigcup_{i=1}^n G(y_i)\right)=\bigcup_{i=1}^n G(y_i).$$
On the other hand, according to Lemma \ref{l31} we have $$\cl(\co\{y_1,y_2,\ldots,y_n\}\cap D)=\co\{y_1,y_2,\ldots,y_n\},$$ hence
 $$\co\{y_1,y_2,\ldots,y_n\}\subseteq\bigcup_{i=1}^n G(y_i).$$ In conclusion $G$ is a KKM map.

Thus, according to Ky Fan's Lemma, $\bigcap_{y\in  D}G(y)\neq\emptyset.$ In other words, there exists $x_0\in K,$ such that $f(x_0,y)\not\in-\inte  C$ for all $y\in D.$

Finally, if $y_0\in K\setminus D,$ by the denseness of $D$ in $K,$ we obtain that  for every neighborhood $U$ of $y_0,$ there exists a $u_0\in U\cap D.$  At this point, the assumption (ii), the C-upper semicontinuity of $f(x_0, y)$ on $K\setminus D$, assures that for all $k\in\inte C$ there exists a neighborhood $U$ of $y_0$ such that $ f(x_0,u)\in f(x_0,y_0)+k-\inte C.$ Assume that $f(x_0,y_0)\in -\inte C.$ Then, let $k=-f(x_0,y_0)\in\inte C.$ Thus, we have that  there exists a neighborhood $U$ of $y_0,$ such that $ f(x_0,u)\in -\inte C.$ But, by choosing $u_0\in U\cap D$ we get that $f(x_0,u_0)\in -\inte C,$ contradiction.
\end{proof}
\begin{remark}
The compactness of the set $K$ in the hypothesis of the previous theorem is rather a strong condition.  The compactness condition can be removed by assuming only the closedness of $K$ but also a so-called coercivity condition. This can be done in the virtue of Fan's Lemma, which do not require the compactness of the set $G(y)$ for every $y\in K,$ but in only one point. Therefore, the following result holds.
\end{remark}

\begin{theorem}
\label{t11} Let $X$ and $Z$ be  Hausdorff, locally convex topological vector spaces,  let $C\subseteq Z$ be a convex,  pointed cone with nonempty interior, and let $K$ be a nonempty, convex and closed subset of $X$. Let $D\subseteq K$ be a
self segment-dense set, and consider the mapping $f :K \times K \longrightarrow{Z}$  satisfying

\begin{itemize}
\item[(i)] $\forall y\in D,$ the mapping $x\longrightarrow f(x,y)$ is
C-upper semicontinuous on $K,$

\item[(ii)] $\forall x\in K,$ the mapping $y\longrightarrow f(x, y)$ is C-upper
semicontinuous on $K\setminus D$,

\item[(iii)] $\forall x\in D,$ the mapping $y\longrightarrow f(x,y)$ is C-convex on $%
D,$

\item[(iv)] $\forall x\in D,\, f(x, x)\not\in-\inte  C,$

\item[(v)] $\exists K_0\subseteq X$ compact and $y_0\in D,$ such that $f(x,y_0)\in-\inte C,$ for all $x\in K\setminus K_0.$
\end{itemize}

Then, there exists an element $x_0\in K$ such that
\begin{equation*}
f(x_0,y)\not\in -\inte{C},\,\forall y\in K.
\end{equation*}
\end{theorem}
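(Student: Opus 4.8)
The plan is to reproduce the structure of the second proof of Theorem \ref{t1}, based on Ky Fan's Lemma, observing that the only role played by the compactness of $K$ there was to guarantee the compactness of $G(y)$ for \emph{every} $y\in D$, whereas Ky Fan's Lemma demands this compactness at a \emph{single} point only. The coercivity hypothesis (v) is designed precisely to furnish such a point. Concretely, I would again introduce the set-valued map
$$G:D\toto K,\quad G(y)=\{x\in K:f(x,y)\not\in-\inte C\},$$
and show that $\bigcap_{y\in D}G(y)\neq\emptyset$.

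First I would verify that $G(y)$ is closed for every $y\in D$ and that $G$ is a KKM map; both arguments are verbatim copies of those in the second proof of Theorem \ref{t1}. The closedness of $G(y)$ uses only the $C$-upper semicontinuity hypothesis (i), while the KKM property $\co\{y_1,\dots,y_n\}\subseteq\bigcup_{i=1}^n G(y_i)$ follows from the $C$-convexity (iii), the diagonal condition (iv), the convexity of $-\inte C$, and, crucially, Lemma \ref{l31}, which upgrades the inclusion $\co\{y_1,\dots,y_n\}\cap D\subseteq\bigcup_{i=1}^n G(y_i)$ to an inclusion on the full simplex. None of these steps invokes the compactness of $K$; they require only that $K$ be convex and that $D$ be self segment-dense in $K$, both of which remain available here.

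The one genuinely new point is the compactness of $G$ at a single argument, and this is where hypothesis (v) enters. By (v) there exist a compact set $K_0$ and a point $y_0\in D$ with $f(x,y_0)\in-\inte C$ for every $x\in K\setminus K_0$; equivalently $G(y_0)\subseteq K_0$. Because $K$ is closed, the limit of any convergent net drawn from $G(y_0)\subseteq K$ already lies in $K$, and the $C$-upper semicontinuity argument from the proof of Theorem \ref{t1} then places that limit in $G(y_0)$; thus $G(y_0)$ is closed in $X$. Being a closed subset of the compact set $K_0$, it is compact. Consequently all three hypotheses of Ky Fan's Lemma are satisfied with $M=D$, and we obtain $\bigcap_{y\in D}G(y)\neq\emptyset$; that is, there exists $x_0\in K$ with $f(x_0,y)\not\in-\inte C$ for all $y\in D$.

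Finally I would pass from $D$ to all of $K$ exactly as at the end of the second proof of Theorem \ref{t1}: for an arbitrary $\bar y\in K\setminus D$ the density of $D$ in $K$ together with the $C$-upper semicontinuity (ii) of $y\mapsto f(x_0,y)$ on $K\setminus D$ rules out $f(x_0,\bar y)\in-\inte C$. Indeed, were $f(x_0,\bar y)\in-\inte C$, then choosing $k=-f(x_0,\bar y)\in\inte C$ would, by (ii), force $f(x_0,u)\in-\inte C$ for all $u$ in some neighborhood $U$ of $\bar y$; picking $u_0\in U\cap D$ then contradicts $f(x_0,u_0)\not\in-\inte C$. I expect the main (and essentially only) obstacle to be the transition from ``compact $K$'' to ``one compact level set,'' namely the verification that (v) yields a set $G(y_0)$ that is closed \emph{in $X$}, for which the closedness of $K$ is exactly what is needed; once this is secured, the remainder of the argument is inherited unchanged from Theorem \ref{t1}.
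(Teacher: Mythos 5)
Your proposal is correct and follows essentially the same route as the paper's own proof: the paper likewise reuses the map $G(y)=\{x\in K:f(x,y)\not\in-\inte C\}$ and the KKM/closedness arguments from Theorem \ref{t1}, shows via (v) that $G(y_0)\subseteq K_0$ (so $G(y_0)$ is a closed subset of a compact set, hence compact), and then invokes Ky Fan's Lemma and the density argument to conclude. Your explicit observation that the closedness of $K$ is what keeps the net-limit argument for closedness of $G(y)$ valid is a detail the paper leaves implicit, but it is the same proof.
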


\begin{proof} Consider the set-valued map $$G:D\toto K,\,\, G(y)=\{x\in K:f(x,y)\not\in-\inte C\}.$$ According to the proof of Theorem \ref{t1} $G$ is a KKM map, and $G(y)$ is closed for all $y\in D.$ We show that $G(y_0)$ is compact. For this is enough to show that $G(y_0)\subseteq K_0.$ Assume the contrary, that is $g(y_0)\not\subseteq K_0.$ Then, there exits $z\in G(y_0)\setminus K_0.$ This implies that $z\in K\setminus K_0,$ and according to (v) $f(z,y_0)\in-\inte C,$ which contradicts the fact that $z\in G(y_0).$

Hence, $G(y_0)$ is a closed subset of the compact set $K_0$ which shows that $G(y_0)$ is compact. The rest of the proof is similar to the proof  of Theorem \ref{t1}, therefore we omit it.
\end{proof}

\begin{remark} Condition (v) usually is hard to be verified.  However, it is well known that, in a reflexive Banach space $X$, the closed ball, with radius $r>0$, $\overline{B}_r:=\{x\in X:\|x\|\le r\},$ is weakly compact. Therefore, if we endow the reflexive Banach space $X$ with the weak topology, we can take $K_0=\overline{B}_r\cap K$, hence, condition (v) in Theorem \ref{t11} becomes :
$$\exists r>0\mbox{ and }y_0\in D,\mbox{ such that, for all }x\in K\mbox{ satisfying }\|x\|>r,$$
one has that
$$f(x,y_0)\in-\inte C.$$

Furthermore, in this setting, condition (v) in the hypothesis of Theorem \ref{t11} can be weakened by assuming that  there exists $r>0$ such that, for all $x\in K$ satisfying %
$\|x\|>r$, there exists some $y_0\in K$ with $\|y_0\|<\|x\|,$ and for which the  condition
$$f(x,y_0)\in-\inte C$$
holds.
\end{remark}

More precisely, we have the following result.

\begin{theorem}
\label{t12} Let $X$ be a reflexive Banach space and let $Z$ be  a Hausdorff, locally convex topological vector space.  Let $C\subseteq Z$ be a convex,  pointed cone with nonempty interior, and let $K$ be a nonempty, convex and closed subset of $X$. Let $D\subseteq K$ be a
self segment-dense set in the weak topology of $X$, and consider the mapping $f :K \times K \longrightarrow{Z}$  satisfying

\begin{itemize}
\item[(i)] $\forall y\in D,$ the mapping $x\longrightarrow f(x,y)$ is
C-upper semicontinuous on $K,$ in the weak topology of $X$,

\item[(ii)] $\forall x\in K,$ the mapping $y\longrightarrow f(x, y)$ is C-upper
semicontinuous on $K\setminus D$, in the weak topology of $X$,

\item[(iii)] $\forall x\in K,$ the mapping $y\longrightarrow f(x,y)$ is C-convex on $K,$

\item[(iv)] $\forall x\in K,\, f(x, x)=0,$

\item[(v)] $\exists r>0$ such that, for all $x\in K$, $\|x\|>r$, there exists $y_0\in K$ with $\|y_0\|<\|x\|,$ such that
$f(x,y_0)\in-\inte C\cup\{0\}.$
\end{itemize}

Then, there exists an element $x_0\in K$ such that
\begin{equation*}
f(x_0,y)\not\in -\inte{C},\,\forall y\in K.
\end{equation*}
\end{theorem}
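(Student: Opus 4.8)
The plan is to reduce the problem to the weakly compact case already settled in Theorem \ref{t1} and then to bootstrap the resulting ``local'' solution to a global one by means of the coercivity hypothesis (v) and the $C$-convexity (iii). Throughout I work in the weak topology of the reflexive space $X$, in which closed balls are compact. A first observation, which simplifies matters relative to Theorems \ref{t1} and \ref{t11}, is that since (iii) and (iv) are now assumed on all of $K$, the set-valued map $G(y)=\{x\in K: f(x,y)\not\in-\inte C\}$ is a KKM map \emph{directly}: for $y_1,\dots,y_n\in K$ and $y=\sum\l_iy_i\in\co\{y_1,\dots,y_n\}\subseteq K$, assuming $f(y,y_i)\in-\inte C$ for all $i$ and summing (using convexity of $-\inte C$, (iii) and $f(y,y)=0$) yields $f(y,y)\in -\inte C$, contradicting $0\not\in-\inte C$. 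Hence no appeal to Lemma \ref{l31} is needed for the KKM property itself, and, exactly as in the proof of Theorem \ref{t1}, hypothesis (i) makes $G(y)$ weakly closed for every $y\in D$.

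First I would fix $\rho>r$ large enough that $K_\rho:=K\cap\overline{B}_\rho\neq\emptyset$. As the intersection of the weakly closed convex set $K$ with the weakly compact ball $\overline{B}_\rho$, the set $K_\rho$ is convex and weakly compact. I would then apply Theorem \ref{t1} (in the weak topology) to $K_\rho$ together with $D_\rho:=D\cap K_\rho$: the restricted hypotheses (i)--(iv) are inherited from those of the present theorem (for (iii),(iv) because they hold on all of $K$; for (ii) because $K_\rho\setminus D_\rho\subseteq K\setminus D$). This produces a point $x_0\in K_\rho$ with $f(x_0,y)\not\in-\inte C$ for all $y\in K_\rho$. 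The delicate point here — and what I expect to be the main obstacle — is verifying that $D_\rho$ is self segment-dense in $K_\rho$: the segment condition is inherited (a segment joining two points of $D_\rho$ lies in $K_\rho$ by convexity, so its dense $D$-intersection lands in $D_\rho$), but the \emph{denseness} of $D_\rho$ in $K_\rho$ for the weak topology is not automatic, since weak approximants of a point of small norm may a priori have large norm; this is exactly the place where Lemma \ref{l31} (density of $D$ in finite-dimensional convex hulls of its points) has to be exploited, passing to the infinite intersection through a finite-intersection argument over the weakly compact ball.

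Granting the local solution $x_0\in K_\rho$, the final step is the coercive bootstrap. I would first produce a point $y_*\in K_\rho$ with $\|y_*\|<\rho$ and $f(x_0,y_*)=0$. If $\|x_0\|<\rho$ this is immediate with $y_*=x_0$, using (iv). If $\|x_0\|=\rho$, then $\|x_0\|>r$, so (v) yields $w\in K$ with $\|w\|<\rho$ and $f(x_0,w)\in-\inte C\cup\{0\}$; since $w\in K_\rho$ and $x_0$ solves on $K_\rho$ we have $f(x_0,w)\not\in-\inte C$, forcing $f(x_0,w)=0$, and I set $y_*=w$. Now let $y\in K$ be arbitrary and put $u_s=(1-s)y_*+sy$. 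Because $\|y_*\|<\rho$, for all small $s>0$ one has $u_s\in K$ and $\|u_s\|\le(1-s)\|y_*\|+s\|y\|<\rho$, so $u_s\in K_\rho$ and hence $f(x_0,u_s)\not\in-\inte C$. On the other hand (iii) and $f(x_0,y_*)=0$ give $f(x_0,u_s)\le s\,f(x_0,y)$, i.e. $f(x_0,u_s)\in s\,f(x_0,y)-C$. If $f(x_0,y)\in-\inte C$ then $s\,f(x_0,y)\in-\inte C$ and, using $\inte C+C=\inte C$, we would get $f(x_0,u_s)\in-\inte C$, a contradiction. Thus $f(x_0,y)\not\in-\inte C$ for every $y\in K$, as required.

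Besides the denseness transfer flagged above, the only genuine subtlety is the appearance of $\{0\}$ in (v): it prevents a direct contradiction in the boundary case $\|x_0\|=\rho$ (one only deduces $f(x_0,w)=0$, not a forbidden value), and it is precisely circumvented by routing the argument through the $C$-convexity inequality $f(x_0,u_s)\le s\,f(x_0,y)$ anchored at the interior point $y_*$ where $f(x_0,\cdot)$ vanishes, rather than by attempting to move outward from $x_0$ itself.
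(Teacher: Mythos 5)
Your proposal is, in its load-bearing second half, exactly the paper's proof: the paper also truncates to $K_0=K\cap\overline{B}_{r_1}$ with $r_1>r$, invokes Theorem \ref{t1} on the weakly compact set $K_0$ to obtain a local solution $x_0$, produces an anchor $z_0\in K_0$ with $\|z_0\|<r_1$ and $f(x_0,z_0)=0$ (the case $\|x_0\|<r_1$ via (iv), the case $\|x_0\|=r_1>r$ via (v) plus the solution property on $K_0$, exactly as you argue), and then runs your convexity bootstrap: for $y\in K$ it picks $\lambda$ with $\lambda z_0+(1-\lambda)y\in K_0$ and derives a contradiction from $f(x_0,y)\in-\inte C$ using (iii) and $\inte C+C=\inte C$. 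Your observation that the KKM property of $G(y)=\{x\in K:f(x,y)\notin-\inte C\}$ now holds on all of $K$ without Lemma \ref{l31} is correct but not load-bearing, since you (like the paper) apply Theorem \ref{t1} as a black box, and closedness of $G(y)$ is still only available for $y\in D$.

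The step you flag as ``the main obstacle'' --- the weak density of $D_\rho=D\cap K_\rho$ in $K_\rho$, without which Theorem \ref{t1} cannot be applied on $K_\rho$ --- is indeed the one genuinely unresolved point in your writeup: your gesture toward ``a finite-intersection argument over the weakly compact ball'' is not an argument. It is worth saying plainly that the paper has the same gap: it applies Theorem \ref{t1} to $K_0$ without exhibiting any self segment-dense subset of $K_0$ (taking $K_0$ itself fails, since (i) gives closedness of $G(y)$ only for $y\in D$). The gap is real but can be closed, so your proof strategy survives. Choose $\rho$ large enough that $K$ contains a point of norm $<\rho$, and suppose some $z\in K_\rho$ has a convex weakly open neighborhood $U$ with $D\cap U\cap\overline{B}_\rho=\emptyset$. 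Set $W:=U\cap\{x\in X:\|x\|<\rho\}$, a norm-open convex set; $W$ meets $K$, because sliding from $z$ along the segment toward a point of $K$ of norm $<\rho$ produces points of $K$ that remain in $U$ and have norm $<\rho$. Since $D\cap W=\emptyset$, Lemma \ref{l31} forces $\co(D)\cap W=\emptyset$: any $p\in\co\{u_1,\dots,u_n\}\cap W$ with $u_i\in D$ would yield a point of $D$ in $W$, because $\co\{u_1,\dots,u_n\}\cap D$ is dense in $\co\{u_1,\dots,u_n\}$ and weak and norm topologies coincide on finite-dimensional convex hulls, while $W$ is norm-open. Now separate the norm-open convex set $W$ from the disjoint convex set $\co(D)$ by a closed hyperplane: the weak closure of $D$, which contains $K$, lies in a weakly closed half-space disjoint from $W$, contradicting $K\cap W\neq\emptyset$. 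Hence $D\cap K_\rho$ is weakly dense (and, by the trivial inheritance of the segment condition you noted, self segment-dense) in $K_\rho$. With this lemma inserted, both your proof and the paper's are complete; your remark that $\rho$ must be chosen with $K_\rho\neq\emptyset$ is likewise a detail the paper skips.
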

\begin{proof} Let $r>0$ such that (v) holds, and let $r_1>r.$ Consider $K_0=K\cap \overline{B}_{r_1}.$ Obviously, $K_0$ is weakly compact, hence, according to Theorem \ref{t1} there exists $x_0\in K_0$ such that $f(x_0,y)\not\in-\inte C,\,\forall y\in K_0.$ We show, that $f(x_0,y)\not\in-\inte C,\,\forall y\in K.$

First we show, that there exists $z_0\in K_0,\,\|z_0\|<r_1$ such that $f(x_0,z_0)=0.$ Indeed, if $\|x_0\|<r_1$ then let $z_0=x_0$ and the conclusion follows from (iv). Assume now, that $\|x_0\|=r_1>r.$ Then, according to (v), we have that there exists $z_0\in K,\,\|z_0\|<\|x_0\|=r_1$ such that $f(x_0,z_0)\in-\inte C\cup\{0\}.$ On the other hand $z_0\in K_0,$ hence $f(x_0,z_0)\not\in-\inte C,$ which leads to $f(x_0,z_0)=0.$

Let $y\in K.$ Then, there exists $\l\in[0,1]$ such that $\l z_0 +(1-\l)y\in K_0,$ consequently $f(x_0,\l z_0 +(1-\l)y)\not\in-\inte C.$ From (iii) we have
$$\l f(x_0,z_0)+(1-\l)f(x_0,y)-f(x_0,\l z_0 +(1-\l)y)\in C$$
or, equivalently
$$(1-\l)f(x_0,y)-f(x_0,\l z_0 +(1-\l)y)\in C.$$
Assume that $f(x_0,y)\in-\inte C.$ Then, $-f(x_0,\l z_0 +(1-\l)y)\in -(1-\l)f(x_0,y)+ C\subseteq \inte C,$ in other words
$$f(x_0,\l z_0 +(1-\l)y)\in-\inte C,$$
contradiction.
\end{proof}

\begin{remark} If one compares  the hypotheses of Theorem \ref{t12} and Theorem \ref{t1}, or Theorem \ref{t11}, observes that the conditions (iii) and (iv) have considerably been changed.  This is due the fact that condition (v) in Theorem \ref{t12} with the assumptions (iii) and (iv) of Theorem \ref{t1} or Theorem \ref{t11} does not assure the existence of a solution for the weak vector equilibrium problem, when $K$ is  closed but not compact.

Our purpose is to overcome this situation by replacing (v) with a condition that assures the existence of a solution under the original assumptions (iii) and (iv). In fact, we show that, if $\forall x\in K,\, y\longrightarrow f(x, y)$ is C-convex on $D,$ respectively $\forall x\in D,\,f(x,x)\not\in-\inte C,$ instead of (iii), respectively (iv) in the previous theorem, then we can replace (v) with:

$\exists r>0,$ such that, for all $x\in K$ satisfying $ \|x\|\le r,$ there exists $y_0\in D$ with $\|y_0\|<r,$  such that $f(x,y_0)\in-\inte C\cup\{0\}.$
\end{remark}

The following result holds.
\begin{theorem}
\label{t13} Let $X$ be a reflexive Banach space and let $Z$ be  a Hausdorff, locally convex topological vector space.  Let $C\subseteq Z$ be a convex,  pointed cone with nonempty interior, and let $K$ be a nonempty, convex and closed subset of $X$. Let $D\subseteq K$ be a
self segment-dense set in the weak topology of $X$, and consider the mapping $f :K \times K \longrightarrow{Z}$  satisfying

\begin{itemize}
\item[(i)] $\forall y\in D,$ the mapping $x\longrightarrow f(x,y)$ is
C-upper semicontinuous on $K,$ in the weak topology of $X$,

\item[(ii)] $\forall x\in K,$ the mapping $y\longrightarrow f(x, y)$ is C-upper
semicontinuous on $K\setminus D$, in the weak topology of $X$,

\item[(iii)] $\forall x\in K,$ the mapping $y\longrightarrow f(x,y)$ is C-convex on $D,$

\item[(iv)] $\forall x\in D,\, f(x, x)\not\in-\inte C,$

\item[(v)] $\exists r>0$ such that, for all $x\in K$, $\|x\|\le r$, there exists $y_0\in D$ with $\|y_0\|<r,$ such that
$f(x,y_0)\in-\inte C\cup\{0\}.$
\end{itemize}

Then, there exists an element $x_0\in K$ such that
\begin{equation*}
f(x_0,y)\not\in -\inte{C},\,\forall y\in K.
\end{equation*}
\end{theorem}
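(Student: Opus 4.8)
The plan is to follow the scheme of Theorem \ref{t12}: first solve the problem on a weakly compact truncation of $K$, then propagate the solution to all of $K$. Since $X$ is reflexive and $K$ is convex and closed, hence weakly closed, the set $K_0:=K\cap\overline{B}_r$ is convex and weakly compact, and I would work throughout in the weak topology of $X$.

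The first step produces a point $x_0\in K_0$ with $f(x_0,y)\notin-\inte C$ for every $y\in D\cap K_0$. To this end I consider the set-valued map $G:D\toto K$, $G(y)=\{x\in K:f(x,y)\notin-\inte C\}$, and its truncation $\tilde G(y)=G(y)\cap K_0$. As in the proof of Theorem \ref{t1}, assumption (i) makes each $G(y)$ weakly closed, so each $\tilde G(y)$ is a weakly closed subset of the weakly compact $K_0$, hence weakly compact. The KKM computation of Theorem \ref{t1}, which invokes only (iii) and (iv) on $D$ together with Lemma \ref{l31}, gives $\co\{y_1,\dots,y_n\}\subseteq\bigcup_{i=1}^n G(y_i)$ whenever $y_1,\dots,y_n\in D$; if in addition $y_1,\dots,y_n\in K_0$ then $\co\{y_1,\dots,y_n\}\subseteq K_0$, and intersecting with $K_0$ yields $\co\{y_1,\dots,y_n\}\subseteq\bigcup_{i=1}^n\tilde G(y_i)$. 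Thus $\tilde G$ is a KKM map on $D\cap K_0$ with weakly compact closed values, and Ky Fan's Lemma furnishes $x_0\in\bigcap_{y\in D\cap K_0}\tilde G(y)$. Crucially, this uses the self-segment-denseness of $D$ in $K$ only through Lemma \ref{l31}, so no denseness of $D\cap K_0$ in $K_0$ is needed.

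The second step upgrades the partial solution to all of $D$. Since $\|x_0\|\le r$, condition (v) provides $z_0\in D$ with $\|z_0\|<r$ and $f(x_0,z_0)\in-\inte C\cup\{0\}$; as $z_0\in D\cap K_0$, the first step forces $f(x_0,z_0)=0$. Given $y\in D$ with $\|y\|>r$, the real function $\lambda\mapsto\|w_\lambda\|$ with $w_\lambda=(1-\lambda)z_0+\lambda y$ is continuous and equals $\|z_0\|<r$ at $\lambda=0$, so $\|w_\lambda\|<r$ for all small $\lambda>0$; by self-segment-denseness $[z_0,y]\cap D$ is dense in $[z_0,y]$, so I may select such a $w_\lambda\in D\cap K_0$. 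Then $f(x_0,w_\lambda)\notin-\inte C$, while the $C$-convexity (iii) of $y\mapsto f(x_0,y)$ on $D$ gives $f(x_0,w_\lambda)\le(1-\lambda)f(x_0,z_0)+\lambda f(x_0,y)=\lambda f(x_0,y)$. If $f(x_0,y)\in-\inte C$, then $\lambda f(x_0,y)\in-\inte C$ and $f(x_0,w_\lambda)\in\lambda f(x_0,y)-C\subseteq-\inte C$, using $\inte C+C=\inte C$, a contradiction. Hence $f(x_0,y)\notin-\inte C$ for every $y\in D$.

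Finally, for $y\in K\setminus D$ I would argue as in the last paragraph of the proof of Theorem \ref{t1}: if $f(x_0,y)\in-\inte C$, apply the $C$-upper semicontinuity (ii) of $y\mapsto f(x_0,y)$ on $K\setminus D$ with $k=-f(x_0,y)\in\inte C$ to obtain a weak neighbourhood $U$ of $y$ with $f(x_0,u)\in-\inte C$ for all $u\in U\cap K$, and then choose $u\in U\cap D$ by denseness of $D$ to contradict the second step. I expect the main obstacle to be that third paragraph: the global $C$-convexity enjoyed in Theorem \ref{t12} is no longer available, so the segment/self-segment-dense argument must knit together the truncation radius, the inward point $z_0$ delivered by (v), and Lemma \ref{l31}; arranging $w_\lambda$ to sit simultaneously in $D$ and strictly inside $\overline{B}_r$ is precisely where the hypotheses must be used in concert.
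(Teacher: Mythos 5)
Your proof is correct, and its overall scheme --- truncate to the weakly compact set $K_0=K\cap\overline{B}_r$, solve the problem there, extend to all of $D$ via (v), (iii) and self-segment-denseness along the segment $[z_0,y]$, and finally extend to $K\setminus D$ via (ii) and weak denseness --- is exactly the paper's. Where you genuinely diverge is the first step. The paper simply invokes Theorem \ref{t1} on $K_0$ to conclude $f(x_0,y)\notin-\inte C$ for \emph{all} $y\in K_0$; but that invocation tacitly requires a self-segment-dense subset of $K_0$, presumably $D\cap K_0$, and neither the weak denseness of $D\cap K_0$ in $K_0$ (not automatic, since $K_0$ has empty weak interior) nor its self-segment-denseness is verified there. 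You instead rerun the KKM/Ky Fan argument with the truncated map $\tilde G(y)=G(y)\cap K_0$ indexed by $y\in D\cap K_0$: the KKM property survives truncation because $K_0$ is convex, closedness of each $G(y)$ comes from (i), and weak compactness of $K_0$ gives compact values, so Ky Fan's Lemma applies without any denseness of $D\cap K_0$ in $K_0$. This yields only the weaker partial conclusion $f(x_0,y)\notin-\inte C$ for $y\in D\cap K_0$ --- but, as your second and third steps show (and as the paper's own extension argument implicitly confirms, since it only ever evaluates the partial solution at points of $D\cap K_0$, namely $z_0$ and $\lambda z_0+(1-\lambda)y$), this weaker statement suffices. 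So your route is self-contained and avoids an unverified step in the published proof, at the modest cost of repeating the KKM computation instead of citing Theorem \ref{t1} as a black box; the remaining two steps coincide with the paper's, including the tacit use of the fact that weak and norm topologies agree on a line segment when picking $w_\lambda\in D$ with $\|w_\lambda\|<r$.
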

\begin{proof}  Let $r>0$ such that (v) holds, and consider $K_0=K\cap \overline{B}_{r}.$ Obviously, $K_0$ is weakly compact, hence, according to Theorem \ref{t1} there exists $x_0\in K_0$ such that $f(x_0,y)\not\in-\inte C,\,\forall y\in K_0.$ We show, that $f(x_0,y)\not\in-\inte C,\,\forall y\in K.$
According to (v) there exists $z_0\in D$ with $\|z_0\|<r,$ such that $f(x,z_0)\in-\inte C\cup\{0\}.$ On the other hand, $z_0\in K_0,$ hence  $f(x_0,z_0)\not\in-\inte C.$ Consequently $f(x_0,z_0)=0.$
Let $y\in D\setminus K_0.$ Then, in virtue of self-segment-denseness of $D$ in $K$, there exists $\l\in[0,1]$ such that $\l z_0 +(1-\l)y\in D\cap K_0,$ consequently $f(x_0,\l z_0 +(1-\l)y)\not\in-\inte C.$ From (iii) we have
$$\l f(x_0,z_0)+(1-\l)f(x_0,y)-f(x_0,\l z_0 +(1-\l)y)\in C$$
or, equivalently
$$(1-\l)f(x_0,y)-f(x_0,\l z_0 +(1-\l)y)\in C.$$
Assume that $f(x_0,y)\in-\inte C.$ Then, $-f(x_0,\l z_0 +(1-\l)y)\in -(1-\l)f(x_0,y)+ C\subseteq \inte C,$ or, in other words
$$f(x_0,\l z_0 +(1-\l)y)\in-\inte C,$$
contradiction.

Hence, $f(x_0,y)\not\in-\inte C,$ for all $y\in D.$
Finally, if $y\in K\setminus D$ by the denseness of $D$ in $K$  for every neighborhood $U$ of $y$ there exists a $u_0\in U\cap D.$  At this point, the assumption $(ii)$, the C-upper semicontinuity of $y\To f(x_0, y)$ on $K\setminus D$, assures that for all $k\in\inte C$ there exists a neighborhood $U$ of $y$ such that $ f(x_0,u)\in f(x_0,y)+k-\inte C.$ Assume that $f(x_0,y)\in -\inte C.$ Then let $k=-f(x_0,y)\in\inte C.$ Thus we have that  exists a neighborhood $U$ of $y$ such that $ f(x_0,u)\in -\inte C.$ But, by choosing $u_0\in U\cap D$ we get that $f(x_0,u_0)\in -\inte C,$ contradiction.
\end{proof}

In what follows, we show that the assumption that $D$ is self-segment-dense, in the hypotheses %
of the previous theorems, is essential and it cannot be replaced by the denseness of $D.$ Indeed, let us consider the Hilbert space of square-summable sequences $l_2$, and let $K=\{x\in l_2:\|x\|\le 1\}$ be its closed unit ball, while $D=\{x\in l_2:\|x\|= 1\}$ is the unit sphere. It is well known that $l_2,$ endowed with the weak topology, is a Hausdorff, locally convex topological vector space, and by Banach-Alaoglu Theorem, $K$ is compact in this topology. Furthermore, we have seen in Example \ref{ex1} that $D$ is dense, but not self segment-dense in $K.$

In this setting we define the vector-valued map
$$f:K\times K\to\R^2,\, f(x,y):=(\<x,y\>-1,\<x,y\>-1),$$
which has the following properties:

\begin{itemize}
\item[(a)] for all $y\in K,\, x\longrightarrow f(x,y)$ is continuous on $K,$

\item[(b)] for all $x\in K,\, y\longrightarrow f(x,y)$ is continuous on $K,$

\item[(c)] for all $x\in K,\, y\longrightarrow f(x,y)$ is affine, hence convex and also concave on $K,$

\item[(d)] $f(x,x)=(0,0)$ for all $x\in D.$
\end{itemize}

Further, consider the nonnegative orthant of $\R^2$, that is $\R^2_+=\{(x_1,x_2)\in\R^2:x_1\ge 0,\,x_2\ge 0\},$ which is obviously a convex and pointed cone.
We see that $f$ satisfies all the assumptions of Theorem \ref{t1},  except the assumption that $D$ is self segment-dense (here $D$ is only dense) and consequently the conclusion of the above mentioned theorem fails, since for $y=0\in K$ and for all $x\in K$, one has
$$f(x,y)=(-1,-1)\in -\inte \R^2_+.$$

\section{Self Segment-Dense Sets and the Strong Vector Equilibrium Problem}

Solution existence for the strong vector equilibrium problem (\ref{p2}), can be provided under some similar conditions as have been obtained in the  previous section for the weak vector equilibrium problem. However, note that the strong C-upper semicontinuity of a map, differs significantly from the C-upper semicontinuity property, as is shown in Remark \ref{difference} . Therefore, despite of similar statements to those presented in previous section, the results of this section can be considered original  and new. 
For the sake of completeness we give some full proofs.

\begin{theorem}
\label{t2} Let $X$ and $Z$ be  Hausdorff, locally convex topological vector spaces,  let $C\subseteq Z$ be a convex,  pointed cone  and
let $K$ be a nonempty, convex and compact subset of $X$. Let $D\subseteq K$ be a
self segment-dense set and consider the mapping $f :K \times K \longrightarrow{Z}$  satisfying

\begin{itemize}
\item[(i)] $\forall y\in D,$ the mapping $x\longrightarrow f(x,y)$ is
strongly C-upper semicontinuous on $K,$

\item[(ii)] $\forall x\in K,$ the mapping $y\longrightarrow f(x, y)$ is strongly C-upper
semicontinuous on $K\setminus D$,

\item[(iii)] $\forall x\in D,$ the mapping $y\longrightarrow f(x,y)$ is C-convex on $%
D,$

\item[(iv)] $\forall x\in D,\, f(x, x)\not\in- C\setminus\{0\}.$
\end{itemize}

Then, there exists an element $x_0\in K$ such that
\begin{equation*}
f(x_0,y)\not\in -C\setminus\{0\},\,\forall y\in K.
\end{equation*}
\end{theorem}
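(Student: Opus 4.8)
The plan is to mirror the structure of the two proofs of Theorem \ref{t1}, adapting each step from the $-\inte C$ setting to the $-C\setminus\{0\}$ setting, while carefully using the \emph{strong} $C$-upper semicontinuity in place of ordinary $C$-upper semicontinuity. The cleanest route is the KKM/Ky Fan argument (the second proof of Theorem \ref{t1}). Define the set-valued map $G:D\toto K$ by $G(y)=\{x\in K:f(x,y)\not\in -C\setminus\{0\}\}$. Since $K$ is compact, once each $G(y)$ is shown to be closed it is automatically compact, so Ky Fan's Lemma will apply the moment we verify that $G$ is closed-valued and KKM.

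First I would show $G(y_0)$ is closed for fixed $y_0\in D$. Take a net $(x_\a)\subseteq G(y_0)$ with $x_\a\to x_0$ and suppose $f(x_0,y_0)\in -C\setminus\{0\}$. By (i) the map $x\mapsto f(x,y_0)$ is strongly $C$-upper semicontinuous at $x_0$, so for every $k\in C\setminus\{0\}$ there is a neighborhood $U$ of $x_0$ with $f(x,y_0)\in f(x_0,y_0)+k-C\setminus\{0\}$ for all $x\in U\cap K$. Choosing $k=-f(x_0,y_0)\in C\setminus\{0\}$ forces $f(x_\a,y_0)\in -C\setminus\{0\}$ for large $\a$, contradicting $x_\a\in G(y_0)$. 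Here the key algebraic identity is $C\setminus\{0\}+C\setminus\{0\}=C\setminus\{0\}$ (the analogue of $\inte C+C=\inte C$ used in Theorem \ref{t1}), which the preliminaries already record and which I would invoke to conclude $0=f(x_0,y_0)+(-f(x_0,y_0))\in -C\setminus\{0\}$, the desired contradiction.

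Next I would verify the KKM property exactly as in proof II of Theorem \ref{t1}: show $\co\{y_1,\dots,y_n\}\cap D\subseteq\bigcup_i G(y_i)$ by assuming some $y=\sum_i\l_i y_i\in D$ lies outside every $G(y_i)$, so that $f(y,y_i)\in -C\setminus\{0\}$ for all $i$; convexity of $C\setminus\{0\}$ (again via $C\setminus\{0\}+C\setminus\{0\}=C\setminus\{0\}$) gives $\sum_i\l_i f(y,y_i)\in -C\setminus\{0\}$, while (iii) yields $f(y,y)\in\sum_i\l_i f(y,y_i)-C\subseteq -C\setminus\{0\}$ (using $C\setminus\{0\}+C=C\setminus\{0\}$), contradicting (iv). Taking closures and applying Lemma \ref{l31} upgrades $\co\{y_1,\dots,y_n\}\cap D\subseteq\bigcup_i G(y_i)$ to $\co\{y_1,\dots,y_n\}\subseteq\bigcup_i G(y_i)$, so $G$ is KKM. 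Ky Fan's Lemma then produces $x_0\in\bigcap_{y\in D}G(y)$, i.e.\ $f(x_0,y)\not\in -C\setminus\{0\}$ for all $y\in D$.

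The final passage from $D$ to all of $K$ is where I expect the only real subtlety, and it is the step that must be handled with the strong semicontinuity. Given $y_0\in K\setminus D$, denseness of $D$ gives points of $D$ in every neighborhood of $y_0$; by (ii), $y\mapsto f(x_0,y)$ is strongly $C$-upper semicontinuous at $y_0$, so for each $k\in C\setminus\{0\}$ there is a neighborhood $U$ of $y_0$ with $f(x_0,u)\in f(x_0,y_0)+k-C\setminus\{0\}$ for $u\in U\cap K$. Assuming $f(x_0,y_0)\in -C\setminus\{0\}$ and taking $k=-f(x_0,y_0)$ gives $f(x_0,u)\in -C\setminus\{0\}$ throughout $U$, and picking $u_0\in U\cap D$ contradicts $x_0\in\bigcap_{y\in D}G(y)$. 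The main obstacle is precisely the bookkeeping with $C\setminus\{0\}$: unlike $\inte C$, the set $C\setminus\{0\}$ is generally not open, so one cannot argue via open covers and a partition of unity as in proof I, which is why the KKM route is preferable; the whole argument instead leans on the semigroup identities $C\setminus\{0\}+C\setminus\{0\}=C\setminus\{0\}$ and $C\setminus\{0\}+C=C\setminus\{0\}$ to replace every use of openness of $\inte C$.
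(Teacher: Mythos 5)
Your KKM proof is correct, but it is a genuinely different route from the one the paper actually takes for Theorem \ref{t2}. The paper proves Theorem \ref{t2} by adapting \emph{proof I} of Theorem \ref{t1}: it assumes no solution exists, shows the sets $V_y=K\setminus G(y)$, $y\in D$, form a relatively open cover of $K$, extracts a finite subcover, and reaches a contradiction via a partition of unity, the Brouwer fixed point theorem and Lemma \ref{l31}; the Ky Fan route you follow is only mentioned in the remark after the theorem (and is what underlies Theorem \ref{t21}). Your stated reason for rejecting the partition-of-unity argument --- that $C\setminus\{0\}$, unlike $\inte C$, is not open --- is mistaken: the openness of $V_y$ is never derived from openness of the cone minus the origin, but from the closedness of $G(y)$, which you yourself establish using only the strong $C$-upper semicontinuity in (i); the remainder of that argument uses only convexity of $-C\setminus\{0\}$, so it goes through verbatim, exactly as in the paper. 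Thus both routes work: yours buys a statement that extends immediately to the closed-plus-coercivity setting (one compact value of $G$ suffices for Fan's lemma), while the paper's is more self-contained. Each ingredient of your proof checks out: closedness of $G(y_0)$ from (i) with $k=-f(x_0,y_0)$; the KKM property from (iii), (iv), convexity of $C\setminus\{0\}$ (valid since $C$ is convex and pointed) and the identity $C+C\setminus\{0\}=C\setminus\{0\}$, upgraded from $\co\{y_1,\dots,y_n\}\cap D$ to $\co\{y_1,\dots,y_n\}$ by Lemma \ref{l31}; compactness of every $G(y)$ from compactness of $K$; and the final passage from $D$ to $K$ via (ii). One small blemish: in your closedness step, the sentence claiming the contradiction is $0=f(x_0,y_0)+(-f(x_0,y_0))\in -C\setminus\{0\}$ is garbled --- the actual contradiction, which your preceding sentence already states correctly, is that $f(x_\alpha,y_0)\in -C\setminus\{0\}$ for large $\alpha$ contradicts $x_\alpha\in G(y_0)$; no semigroup identity is needed at that point.
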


\begin{proof}  Assume the contrary, that is, for every $x\in K$ there exists $y\in K$ such that $f(x,y)\in-C\setminus\{0\}.$ Then, for every $y\in K$ consider $V_y=\{x\in K: f(x,y)\in-C\setminus\{0\}\}.$ Obviously $K\subseteq \cup_{y\in K} V_y.$ We show that $(V_y)_{y\in D}$ is an open cover of $K.$ First of all observe that for all $y\in D,$ one has  $V_y=K\setminus G(y)$, where $G(y)$ is the set $\{x\in K:f(x,y)\not\in-C\setminus\{0\}\}.$ We show that $G(y)$ is closed for all $y\in D.$ Indeed, for fixed $y_0\in D$ consider the net $(x_\a)\subseteq G(y_0)$ and let $\lim x_\a=x_0.$ Assume that $x_0\not\in G(y_0).$ Then $f(x_0,y_0)\in-C\setminus\{0\}$.
 According to (i) the function $x\longrightarrow f(x,y_0)$ is strongly C-upper semicontinuous at $x_0$, hence  for any $k\in C\setminus\{0\}$, there exists a neighborhood $U$ of $x_0$ such that $f(u,y_0) \in f(x_0,y_0) + k - C\setminus\{0\}$ for all $u \in U.$ But then for $k=-f(x_0,y_0)\in C\setminus\{0\}$  one obtains that there exits $\a_0$ such that $f(x_\a,y_0)\in - C\setminus\{0\},$ for $\a\ge\a_0,$ which contradicts the fact that $(x_\a)\subseteq G(y_0)$. 
Consequently, $V_y$ is open for every $y\in D.$

Assume now that there exists $x_0\in K$ such that $x_0\not\in \cup_{y\in D} V_y.$ Then $f(x_0,y)\not\in-C\setminus\{0\}$, for all $y\in D.$ We show that  $f(x_0,y)\not\in-C\setminus\{0\}$, for all $y\in K.$ Indeed, for $y_0\in K\setminus D,$ by the denseness of $D$ in $K,$ we have that  for every neighborhood $U$ of $y_0$ there exists a $u_0\in U\cap D.$  At this point, the assumption $(ii)$, the strongly C-upper semicontinuity of $f(x_0, y)$ on $K\setminus D$, assures that for all $k\in C\setminus\{0\}$ there exists a neighborhood $U$ of $y_0$ such that $ f(x_0,u)\in f(x_0,y_0)+k-C\setminus\{0\}.$ Assume that $f(x_0,y_0)\in -C\setminus\{0\}.$ Then let $k=-f(x_0,y_0)\in C\setminus\{0\}.$ Thus we have that  exists a neighborhood $U$ of $y_0$ such that $ f(x_0,U)\subseteq -C\setminus\{0\}.$ But, by choosing $u_0\in U\cap D$ we get that $f(x_0,u_0)\in -C\setminus\{0\},$ contradiction. Hence, $f(x_0,y)\not\in-C\setminus\{0\}$, for all $y\in K$, which contradicts our assumption, that for every $x\in K$ there exists $y\in K$ such that $f(x,y)\in-C\setminus\{0\}.$

Consequently, $(V_y)_{y\in D}$ is an open cover of the compact set $K$, in conclusion it contains a finite subcover. In other words, there exists $y_1,y_2,...,y_n\in D$ such that $K\subseteq \cup_{i=1}^n V_{y_i}.$ Consider $\big(p_i\big)_{i=\overline{1,n}}$  a continuous partition of  unity associated to the open cover $\big(V_{y_i}\big)_{i=\overline{1,n}}$. Then $p_i:K\To[0,1]$ is continuous and $\mbox{supp}(p_i)=\cl\{x\in K: p_i(x)\neq 0\}\subseteq V_{y_i}$ for all $i\in\{1,2,...,n\}$, moreover $\sum_{i=1}^n p_i(x)=1,$ for all $x\in K.$

Consider the mapping $\varphi:\co\{y_1,y_2,...,y_n\}\To\co\{y_1,y_2,...,y_n\}$
$$\varphi(x)=\sum_{i=1}^n p_i(x)y_i.$$
Obviously $\varphi$ is continuous, and $\co\{y_1,y_2,...,y_n\}$ is a compact and convex subset of the  finite dimensional  space $\mbox{span}\{y_1,y_2,...,y_n\}.$ Hence, by the Brouwer fixed point theorem, there exists $x_0\in \co\{y_1,y_2,...,y_n\}$ such that $\varphi(x_0)=x_0.$

Let $J=\{i\in\{1,2,...,n\}:p_i(x_0)>0\}.$ Obviously $J$ is nonempty, since $\sum_{i\in J} p_i(x_0)=1,$ and $$\varphi(x_0)=\sum_{i=1}^n p_i(x_0)y_i=\sum_{i\in J}p_i(x_0)y_i=x_0.$$
The latter  equality shows, that $x_0\in\co\{y_i:i\in J\}.$ On the other hand, from $p_i(x_0)>0$ for all $i\in J$ we obtain that $x_0\in\cap_{i\in J}V_{y_i}.$ Since $\cap_{i\in J}V_{y_i}$ is open, we  obtain  that there exists $U$ an open and convex neighbourhood of $x_0$ such that $U\subseteq \cap_{i\in J}V_{y_i}.$ Since $\co\{y_i:i\in J\}\cap U\neq\emptyset$, from Lemma \ref{l31}, we have that there exists $y_0\in \co\{y_i:i\in J\}\cap U\cap D.$
Hence, we  have $y_0=\sum_{i\in J}\l_i y_i\in \co\{y_i:i\in J\}\cap U\cap D$ for some $\l_i\ge0$ for all $i\in J$ and $\sum_{i\in J}\l_i=1$, and by (iv) in the hypothesis of the theorem one gets $f(y_0,y_0)\not\in-C\setminus\{0\}.$ On the other hand, by using (iii) we get $$f(y_0,y_0) =f(y_0,\sum_{i\in J}\l_i y_i)\le\sum_{i\in J}\l_i f(y_0,y_i),$$ which shows that $\sum_{i\in J}\l_i f(y_0,y_i)-f(y_0,y_0)\in C.$ But, $y_0\in U$, thus $f(y_0,y_i)\in-C\setminus\{0\}$ for all $i\in J.$ Hence $\sum_{i\in J}\l_i f(y_0,y_i)\in -C\setminus\{0\},$ which leads to $$-f(y_0,y_0)\in C-\sum_{i\in J}\l_i f(y_0,y_i)\subseteq C\setminus\{0\},$$ contradiction.

\end{proof}

\begin{remark} Obviously, one can give a proof of the previous theorem based on Ky Fan's Lemma, analogously to the proof of Theorem \ref{t1}. Therefore,  the rigid assumption of compactness of the set $K$ in the hypothesis of the previous theorem  can be replaced by its closedness and a coercivity condition. This can be done in the virtue of Fan's Lemma, which do not require the compactness of $G(y)$ for every $y\in K$ but in only one point. In conclusion, the following result holds.
\end{remark}

\begin{theorem}
\label{t21} Let $X$ and $Z$ be  Hausdorff, locally convex topological vector spaces,  let $C\subseteq Z$ be a convex,  pointed cone  and
let $K$ be a nonempty, convex and closed subset of $X$. Let $D\subseteq K$ be a
self segment-dense set and consider the mapping $f :K \times K \longrightarrow{Z}$  satisfying

\begin{itemize}
\item[(i)] $\forall y\in D,$ the mapping $x\longrightarrow f(x,y)$ is strongly
C-upper semicontinuous on $K,$

\item[(ii)] $\forall x\in K,$ the mapping $y\longrightarrow f(x, y)$ is strongly C-upper
semicontinuous on $K\setminus D$,

\item[(iii)] $\forall x\in D,$ the mapping $y\longrightarrow f(x,y)$ is C-convex on $%
D,$

\item[(iv)] $\forall x\in D,\, f(x, x)\not\in-  C\setminus\{0\},$

\item[(v)] $\exists K_0\subseteq X$ and $y_0\in D,$ such that $f(x,y_0)\in-C\setminus\{0\},$ for all $x\in K\setminus K_0.$
\end{itemize}

Then, there exists an element $x_0\in K$ such that
\begin{equation*}
f(x_0,y)\not\in -C\setminus\{0\},\,\forall y\in K.
\end{equation*}
\end{theorem}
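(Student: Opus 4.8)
The plan is to follow exactly the structure already laid out in the remark preceding Theorem~\ref{t21}: reduce the closed case to the compact case by invoking Ky Fan's Lemma, using the coercivity assumption (v) to supply the single compact value of the KKM map that the Lemma requires. First I would define the set-valued map $G:D\toto K$ by $G(y)=\{x\in K:f(x,y)\not\in-C\setminus\{0\}\}$, exactly as in the strong-case Theorem~\ref{t2}. From the proof of Theorem~\ref{t2} I may borrow, with no change, two facts: that each $G(y)$ is closed for $y\in D$ (this used only assumption (i), the strong $C$-upper semicontinuity, and the identity $C+C\setminus\{0\}=C\setminus\{0\}$ recorded in the Preliminaries), and that $G$ is a KKM map on $D$ (this used assumptions (iii) and (iv), the convexity of $-C\setminus\{0\}$, together with Lemma~\ref{l31} to pass from $\co\{y_1,\ldots,y_n\}\cap D$ to the full convex hull).

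The new ingredient, and the one genuine step, is to verify that $G(y_0)$ is compact for the distinguished point $y_0\in D$ furnished by (v). I would argue exactly as in the proof of Theorem~\ref{t11}: it suffices to show $G(y_0)\subseteq K_0$. Suppose not; then there is some $z\in G(y_0)\setminus K_0$, so $z\in K\setminus K_0$, and assumption (v) gives $f(z,y_0)\in-C\setminus\{0\}$, contradicting $z\in G(y_0)$. Hence $G(y_0)\subseteq K_0$; since $G(y_0)$ is closed and sits inside the (assumed) compact $K_0$, it is compact. With every hypothesis of Ky Fan's Lemma now met --- $G(y)$ closed for all $y\in D$, $G$ a KKM map, and $G(y_0)$ compact --- the Lemma yields $\bigcap_{y\in D}G(y)\neq\emptyset$, so there exists $x_0\in K$ with $f(x_0,y)\not\in-C\setminus\{0\}$ for all $y\in D$.

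It remains to promote this from $D$ to all of $K$, and here I would reuse verbatim the density argument that closes the proofs of Theorems~\ref{t1} and \ref{t2}. Given $y_0\in K\setminus D$, suppose $f(x_0,y_0)\in-C\setminus\{0\}$. By assumption (ii), the strong $C$-upper semicontinuity of $y\mapsto f(x_0,y)$ on $K\setminus D$, applied with $k=-f(x_0,y_0)\in C\setminus\{0\}$, there is a neighborhood $U$ of $y_0$ with $f(x_0,u)\in -C\setminus\{0\}$ for all $u\in U$; density of $D$ in $K$ then produces some $u_0\in U\cap D$ with $f(x_0,u_0)\in-C\setminus\{0\}$, contradicting what we proved on $D$. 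Therefore $f(x_0,y)\not\in-C\setminus\{0\}$ for all $y\in K$, which is the desired conclusion.

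I do not expect any serious obstacle: the entire argument is assembled from pieces already established in the proofs of Theorems~\ref{t2} and~\ref{t11}, and the only point requiring independent verification is the inclusion $G(y_0)\subseteq K_0$, which is immediate from the coercivity condition (v). One should note the mild discrepancy that in Theorem~\ref{t11} the set $K_0$ is explicitly assumed compact, whereas the statement of (v) here does not repeat that word; I would read (v) as intending $K_0$ compact (as in Theorem~\ref{t11}), since otherwise the compactness of $G(y_0)$ cannot be concluded. Because the whole proof is a transcription of earlier arguments with the single coercivity modification, I would, as the authors do for Theorem~\ref{t11}, be tempted to state only this new step and refer to the proof of Theorem~\ref{t2} for the remainder.
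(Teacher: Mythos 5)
Your proposal is correct and follows exactly the route the paper intends: the paper's own proof of Theorem~\ref{t21} simply refers to the proof of Theorem~\ref{t11}, i.e., the same KKM map $G(y)=\{x\in K: f(x,y)\not\in -C\setminus\{0\}\}$, with closedness of the values and the KKM property carried over from the strong-case argument of Theorem~\ref{t2} (via Lemma~\ref{l31}), compactness of $G(y_0)$ obtained from the inclusion $G(y_0)\subseteq K_0$ forced by (v), then Ky Fan's Lemma and the density argument using (ii). Your remark that condition (v) here omits the word ``compact'' for $K_0$ (which is explicitly present in Theorem~\ref{t11}) is also well taken: this is evidently a typographical omission in the statement, and compactness of $K_0$ must indeed be assumed for the argument to close.
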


\begin{proof}  The  proof is similar to the proof  of Theorem \ref{t11} therefore we omit it.
\end{proof}

\begin{remark} If $X$ is a reflexive Banach space, endowed with the weak topology, $\overline{B}_r:=\{x\in X:\|x\|\le r\}\subseteq X,$ is a closed ball with radius $r>0$, then one can take $K_0=K\cap \overline{B}_r$. Therefore,  condition (v) in Theorem \ref{t21} becomes:
$$\exists r>0\mbox{ and }y_0\in D,\mbox{ such that, for all }x\in K\mbox{ satisfying }\|x\|>r,$$
one has that
$$f(x,y_0)\in-C\setminus\{0\}.$$

Furthermore, in this setting, condition (v) in the hypothesis of Theorem \ref{t21} can be weakened by assuming that  there exists $r>0$ such that, for all $x\in K$ satisfying %
$\|x\|>r$, there exists some $y_0\in K$ with $\|y_0\|<\|x\|,$ and for which the  condition
$$f(x,y_0)\in-C\setminus\{0\}$$
holds.
\end{remark}

More precisely, we have the following result.

\begin{theorem}
\label{t22} Let $X$ be a reflexive Banach space and let $Z$ be  a Hausdorff, locally convex topological vector space,  let $C\subseteq Z$ be a convex,  pointed cone and let $K$ be a nonempty, convex and closed subset of $X$. Let $D\subseteq K$ be a
self segment-dense set in the weak topology of $X$, and consider the mapping $f :K \times K \longrightarrow{Z}$  satisfying

\begin{itemize}
\item[(i)] $\forall y\in D,$ the mapping $x\longrightarrow f(x,y)$ is strongly
C-upper semicontinuous on $K,$ in the weak topology of $X$,

\item[(ii)] $\forall x\in K,$ the mapping $y\longrightarrow f(x, y)$ is strongly C-upper
semicontinuous on $K\setminus D$, in the weak topology of $X$,

\item[(iii)] $\forall x\in K,$ the mapping $y\longrightarrow f(x,y)$ is C-convex on $K,$

\item[(iv)] $\forall x\in K,\, f(x, x)=0,$

\item[(v)] $\exists r>0$ such that, for all $x\in K$, $\|x\|>r$, there exists $y_0\in K$ with $\|y_0\|<\|x\|,$ such that
$f(x,y_0)\in- C.$
\end{itemize}

Then, there exists an element $x_0\in K$ such that
\begin{equation*}
f(x_0,y)\not\in -C\setminus\{0\},\,\forall y\in K.
\end{equation*}
\end{theorem}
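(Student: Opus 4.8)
The plan is to mirror the proof of Theorem \ref{t12}, systematically replacing the cone $-\inte C$ throughout by $-C\setminus\{0\}$ and invoking the strong compact existence result, Theorem \ref{t2}, in place of Theorem \ref{t1}. First I would fix $r>0$ as in (v), choose $r_1>r$, and set $K_0=K\cap\overline{B}_{r_1}$. Since $X$ is reflexive and we work in the weak topology, $K_0$ is weakly compact, convex and nonempty; the semicontinuity and convexity hypotheses (i)--(iv) restrict to $K_0$ (with $D\cap K_0$ serving as the self segment-dense set), so Theorem \ref{t2} furnishes a point $x_0\in K_0$ with $f(x_0,y)\not\in-C\setminus\{0\}$ for all $y\in K_0$. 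It then remains to upgrade this to every $y\in K$.

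The key auxiliary step is to produce an \emph{interior anchor}: a point $z_0\in K_0$ with $\|z_0\|<r_1$ and $f(x_0,z_0)=0$. If $\|x_0\|<r_1$ I would take $z_0=x_0$, for then $f(x_0,z_0)=f(x_0,x_0)=0$ by (iv). If instead $\|x_0\|=r_1>r$, condition (v) yields $z_0\in K$ with $\|z_0\|<\|x_0\|=r_1$ and $f(x_0,z_0)\in-C$; since $z_0\in K_0$ we also have $f(x_0,z_0)\not\in-C\setminus\{0\}$, and as $-C=(-C\setminus\{0\})\cup\{0\}$ these two facts force $f(x_0,z_0)=0$.

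With $z_0$ in hand, take an arbitrary $y\in K$. If $\|y\|\le r_1$ then $y\in K_0$ and $f(x_0,y)\not\in-C\setminus\{0\}$ already. Otherwise $\|y\|>r_1>\|z_0\|$, and by the intermediate value theorem applied to $\lambda\mapsto\|\lambda z_0+(1-\lambda)y\|$ there is $\lambda\in(0,1)$ with $w:=\lambda z_0+(1-\lambda)y\in K_0$; in particular $1-\lambda>0$, so $f(x_0,w)\not\in-C\setminus\{0\}$. Using the $C$-convexity (iii) together with $f(x_0,z_0)=0$ gives
$$(1-\lambda)f(x_0,y)-f(x_0,w)\in C.$$
Suppose, for contradiction, that $f(x_0,y)\in-C\setminus\{0\}$. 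Since $1-\lambda>0$ and $C\setminus\{0\}$ is invariant under multiplication by positive scalars, $-(1-\lambda)f(x_0,y)\in C\setminus\{0\}$, whence, by the identity $C+(C\setminus\{0\})=C\setminus\{0\}$ recorded in the preliminaries,
$$-f(x_0,w)\in -(1-\lambda)f(x_0,y)+C\subseteq (C\setminus\{0\})+C=C\setminus\{0\}.$$
This says $f(x_0,w)\in-C\setminus\{0\}$, contradicting $w\in K_0$. Hence $f(x_0,y)\not\in-C\setminus\{0\}$ for every $y\in K$, as required.

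The main obstacle I anticipate is algebraic rather than topological: the strong problem forces me to replace the clean relation $\inte C+C=\inte C$ used in the weak case by $C+(C\setminus\{0\})=C\setminus\{0\}$, and to check carefully that the case $\lambda=1$ is excluded so that $(1-\lambda)f(x_0,y)$ remains in $C\setminus\{0\}$ rather than collapsing to $0$. A secondary point demanding care is the reduction to $K_0$: one must confirm that the strong $C$-upper semicontinuity hypotheses (i)--(ii) and the self segment-denseness of $D$ pass to the weakly compact slice $K_0$, exactly as in the proof of Theorem \ref{t12}, so that Theorem \ref{t2} is genuinely applicable there.
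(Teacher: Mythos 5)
Your proposal is correct and follows essentially the same route as the paper: reduce to the weakly compact slice $K_0=K\cap\overline{B}_{r_1}$ and apply Theorem \ref{t2}, produce the anchor $z_0\in K_0$ with $f(x_0,z_0)=0$ via (iv)/(v), and then extend to all of $K$ by the $C$-convexity argument using $C+(C\setminus\{0\})=C\setminus\{0\}$. Your explicit attention to excluding $\lambda=1$ and to restricting the hypotheses (and $D$) to $K_0$ only makes precise details that the paper leaves implicit in its appeal to the proof of Theorem \ref{t12}.
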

\begin{proof} Let $r>0$ such that (v) holds, and let $r_1>r.$ Consider $K_0=K\cap \overline{B}_{r_1}.$ Obviously, $K_0$ is weakly compact, hence, according to Theorem \ref{t2} there exists $x_0\in K_0$ such that $f(x_0,y)\not\in-C\setminus\{0\},\,\forall y\in K_0.$ We claim that $f(x_0,y)\not\in-C\setminus\{0\},\,\forall y\in K.$

We show, as in the proof of Theorem \ref{t12}, that there exists $z_0\in K_0,\,\|z_0\|<r_1$ such that $f(x_0,z_0)=0.$ For $y\in K$ as in the proof of Theorem \ref{t12} we show that there exists $\l\in[0,1]$ such that $\l z_0 +(1-\l)y\in K_0,$  $f(x_0,\l z_0 +(1-\l)y)\not\in-C\setminus\{0\},$ and
$$(1-\l)f(x_0,y)-f(x_0,\l z_0 +(1-\l)y)\in C.$$
Assume that $f(x_0,y)\in-C\setminus\{0\}.$ Then, $-f(x_0,\l z_0 +(1-\l)y)\in -(1-\l)f(x_0,y)+ C\subseteq C\setminus\{0\},$ in other words
$$f(x_0,\l z_0 +(1-\l)y)\in-C\setminus\{0\},$$
contradiction.
\end{proof}

\begin{remark} One can observe, that that the conditions (iii) and (iv) in  the hypothesis of Theorem \ref{t22} considerably differ from the assumptions used in the hypothesis of Theorem \ref{t2}.  This is due the fact that condition (v) in Theorem \ref{t22} with the assumptions (iii) and (iv) of Theorem \ref{t2}  does not assure the existence of a solution for the strong vector equilibrium problem, when $K$ is  closed but not compact.

Next,  we obtain the existence of a solution for strong vector equilibrium problem under the original assumptions (iii) and (iv) by replacing (v) with a more suitable condition. In fact, we show that, if $\forall x\in K,\, y\longrightarrow f(x, y)$ is C-convex on $D,$ respectively $\forall x\in D,\,f(x,x)\not\in-C\setminus\{0\}$ instead of (iii), respectively (iv) in the previous theorem, then we can replace (v) with:

$\exists r>0,$ such that, for all $x\in K$ satisfying $ \|x\|\le r,$ there exists $y_0\in D$ with $\|y_0\|<r,$  such that $f(x,y_0)\in-C.$
\end{remark}

The following result holds.
\begin{theorem}
\label{t23} Let $X$ be a reflexive Banach space and let $Z$ be  a Hausdorff, locally convex topological vector space,  let $C\subseteq Z$ be a convex,  pointed cone  and let $K$ be a nonempty, convex and closed subset of $X$. Let $D\subseteq K$ be a
self segment-dense set in the weak topology of $X$, and consider the mapping $f :K \times K \longrightarrow{Z}$  satisfying

\begin{itemize}
\item[(i)] $\forall y\in D,$ the mapping $x\longrightarrow f(x,y)$ is strongly
C-upper semicontinuous on $K,$ in the weak topology of $X$,

\item[(ii)] $\forall x\in K,$ the mapping $y\longrightarrow f(x, y)$ is strongly C-upper
semicontinuous on $K\setminus D$, in the weak topology of $X$,

\item[(iii)] $\forall x\in K,$ the mapping $y\longrightarrow f(x,y)$ is C-convex on $D,$

\item[(iv)] $\forall x\in D,\, f(x, x)\not\in-C\setminus\{0\},$

\item[(v)] $\exists r>0$ such that, for all $x\in K$, $\|x\|\le r$, there exists $y_0\in D$ with $\|y_0\|<r,$ such that
$f(x,y_0)\in-C.$
\end{itemize}

Then, there exists an element $x_0\in K$ such that
\begin{equation*}
f(x_0,y)\not\in -C\setminus\{0\},\,\forall y\in K.
\end{equation*}
\end{theorem}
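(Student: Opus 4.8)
The plan is to run the same scheme as in the proof of Theorem~\ref{t13}, systematically replacing $-\inte C$ by $-C\setminus\{0\}$, the appeal to Theorem~\ref{t1} by one to Theorem~\ref{t2}, and C-upper semicontinuity by strong C-upper semicontinuity. First I would fix $r>0$ as in (v) and put $K_0=K\cap\overline{B}_r$. Since $X$ is reflexive and $K$ is convex and closed, hence weakly closed, the set $K_0$ is convex and weakly compact, and one verifies that $D\cap K_0$ is self segment-dense in $K_0$. The restrictions to $K_0$ of (i)--(iv) are then precisely the hypotheses of Theorem~\ref{t2}, so that theorem produces $x_0\in K_0$ with $f(x_0,y)\not\in-C\setminus\{0\}$ for all $y\in K_0$; the whole task is then to upgrade this to all $y\in K$.

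The key preliminary step is to locate an interior point at which $f(x_0,\cdot)$ vanishes. Since $\|x_0\|\le r$, condition (v) yields $z_0\in D$ with $\|z_0\|<r$ and $f(x_0,z_0)\in -C$; as $\|z_0\|<r$ gives $z_0\in K_0$, we also have $f(x_0,z_0)\not\in-C\setminus\{0\}$, and the two facts force $f(x_0,z_0)=0$. Next I would handle $y\in D\setminus K_0$, i.e.\ $y\in D$ with $\|y\|>r$. Here the self-segment-denseness of $D$ enters decisively: on the segment $[z_0,y]$ the norm is continuous (the segment lies in a finite-dimensional subspace, where the weak and norm topologies agree), so the points of norm $<r$ form a relative neighborhood of $z_0$; since $[z_0,y]\cap D$ is dense in $[z_0,y]$, I can choose $w=\lambda z_0+(1-\lambda)y\in D\cap K_0$ with $1-\lambda>0$. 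Applying (iii) at $x_0$ to the points $z_0,y$ and using $f(x_0,z_0)=0$ gives $(1-\lambda)f(x_0,y)-f(x_0,w)\in C$. If $f(x_0,y)\in-C\setminus\{0\}$, then $-(1-\lambda)f(x_0,y)\in C\setminus\{0\}$ because $t(C\setminus\{0\})=C\setminus\{0\}$ for $t>0$, whence $-f(x_0,w)\in(C\setminus\{0\})+C=C\setminus\{0\}$ by the identity $C+C\setminus\{0\}=C\setminus\{0\}$ from Section~2, i.e.\ $f(x_0,w)\in-C\setminus\{0\}$, contradicting $w\in D\cap K_0$. Thus $f(x_0,y)\not\in-C\setminus\{0\}$ for every $y\in D$.

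It then remains to pass from $D$ to $K$, exactly as at the end of the proof of Theorem~\ref{t13} but with the strong semicontinuity. Given $y\in K\setminus D$, suppose $f(x_0,y)\in-C\setminus\{0\}$ and set $k=-f(x_0,y)\in C\setminus\{0\}$. Assumption (ii) furnishes a weak neighborhood $U$ of $y$ with $f(x_0,u)\in f(x_0,y)+k-C\setminus\{0\}=-C\setminus\{0\}$ for every $u\in U\cap K$; picking $u_0\in U\cap D$, which is possible by the denseness of $D$ in $K$, contradicts the conclusion of the previous paragraph, and the proof is complete.

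The step I expect to be most delicate is the reduction to the compact case: verifying that $D\cap K_0$ is genuinely self segment-dense in $K_0$ and that (iii)--(iv) survive the restriction to $K_0$, so that Theorem~\ref{t2} really applies. Closely tied to this is the selection of $w\in D\cap K_0$ on the open segment $]z_0,y[$ with $1-\lambda>0$; it is exactly at these two points that mere denseness of $D$ would be insufficient and the full strength of the self-segment-dense hypothesis is needed.
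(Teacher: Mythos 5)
Your proposal is correct and follows essentially the same route as the paper's own proof: reduction to the weakly compact set $K_0=K\cap\overline{B}_r$ via Theorem~\ref{t2}, use of (v) to produce $z_0\in D\cap K_0$ with $f(x_0,z_0)=0$, the C-convexity/self-segment-density argument on the segment $[z_0,y]$ to cover $y\in D\setminus K_0$, and assumption (ii) together with denseness to pass from $D$ to all of $K$. You are in fact slightly more careful than the paper at two points it leaves implicit, namely the choice of $w\neq z_0$ on the segment (so that $1-\lambda>0$, without which no contradiction arises) and the verification that $D\cap K_0$ is self segment-dense in $K_0$ before invoking Theorem~\ref{t2}.
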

\begin{proof}  For the sake of completeness we give a full proof. Let $r>0$ such that (v) holds, and consider $K_0=K\cap \overline{B}_{r}.$ Obviously, $K_0$ is weakly compact, hence, according to Theorem \ref{t2} there exists $x_0\in K_0$ such that $f(x_0,y)\not\in-C\setminus\{0\},\,\forall y\in K_0.$ We show, that $f(x_0,y)\not\in-C\setminus\{0\}\,\forall y\in K.$
According to (v) there exists $z_0\in D$ with $\|z_0\|<r,$ such that $f(x,z_0)\in-C.$ On the other hand, $z_0\in K_0,$ hence  $f(x_0,z_0)\not\in-C\setminus\{0\}.$ Consequently $f(x_0,z_0)=0.$
Let $y\in D\setminus K_0.$ Then, in virtue of self-segment-denseness of $D$ in $K$, there exists $\l\in[0,1]$ such that $\l z_0 +(1-\l)y\in D\cap K_0,$ consequently $f(x_0,\l z_0 +(1-\l)y)\not\in-C\setminus\{0\}.$ From (iii) we have
$$\l f(x_0,z_0)+(1-\l)f(x_0,y)-f(x_0,\l z_0 +(1-\l)y)\in C$$
or, equivalently
$$(1-\l)f(x_0,y)-f(x_0,\l z_0 +(1-\l)y)\in C.$$
Assume that $f(x_0,y)\in-C\setminus\{0\}.$ Then, $-f(x_0,\l z_0 +(1-\l)y)\in -(1-\l)f(x_0,y)+ C\subseteq C\setminus\{0\},$ in other words
$$f(x_0,\l z_0 +(1-\l)y)\in-C\setminus\{0\},$$
contradiction.

Hence, $f(x_0,y)\not\in-C\setminus\{0\},$ for all $y\in D.$
Finally, if $y\in K\setminus D$ by the denseness of $D$ in $K$  for every neighborhood $U$ of $y$ there exists a $u_0\in U\cap D.$  At this point, the assumption $(ii)$, the proper C-upper semicontinuity of $y\To f(x_0, y)$ on $K\setminus D$, assures that for all $k\in C\setminus\{0\}$ there exists a neighborhood $U$ of $y$ such that $ f(x_0,u)\in f(x_0,y)+k-C\setminus\{0\}.$ Assume that $f(x_0,y)\in -C\setminus\{0\}.$ Then let $k=-f(x_0,y)\in C\setminus\{0\}.$ Thus we have that  exists a neighborhood $U$ of $y$ such that $ f(x_0,u)\in -C\setminus\{0\}.$ But, by choosing $u_0\in U\cap D$ we get that $f(x_0,u_0)\in -C\setminus\{0\},$ contradiction.
\end{proof}

In what follows, we show that the assumption that $D$ is self-segment-dense, in the hypotheses %
of the previous theorems, is essential and it cannot be replaced by the denseness of $D.$ Indeed, let us consider the Hilbert space of square-summable sequences $l_2$, and let $K=\{x\in l_2:\|x\|\le 1\}$ be its unit ball, while $D=\{x\in l_2:\|x\|= 1\}$ is the unit sphere. It is well known that $l_2,$ endowed with the weak topology, is a Hausdorff locally convex topological vector space, and by Banach-Alaoglu Theorem, $K$ is compact in this topology. Furthermore, we have seen in Example \ref{ex1} that $D$ is dense, but not self segment-dense in $K.$

In this setting we define the vector-valued map
$$f:K\times K\To\R^2,\, f(x,y):=(\<x,y\>-1,0).$$ Further, consider the $C=\R_+\times \{0\}=\{(x,0): x\in\R,\,x\ge 0\},$ which is obviously a convex pointed cone.
It can easily be verified that  all the assumptions of  Theorem \ref{t2} are satified, except the assumption that $D$ is self segment-dense (here $D$ is only dense) and also that the conclusions of the above mentioned theorem fails, since for $y=0\in K$ and for all $x\in K$, one has
$$f(x,y)=(-1,0)\in  -C\setminus\{(0,0)\}.$$


\section{Applications}

In this section, we apply the results obtained to vector optimization problems and vector variational inequalities.
\subsection{Vector Optimization}

Let $X$ and $Z$ be  Hausdorff, locally convex topological vector spaces,  let $C\subseteq Z$ be a convex,  pointed cone  and
let $K$ be a nonempty subset of $X$.
Let $F:K\To Z$ be a vector function. Recall that $x_0\in K$ is called a weakly  efficient point, respectively efficient point  of $F$ \cite{GGR,CM}, iff $F(y)-F(x_0)\not\in-\inte C$ for all $y\in K,$ respectively $F(y)-F(x_0)\not\in- C\setminus\{0\}$ for all $y\in K.$
Based on the results obtained in the  previous sections, we  can  state some results regarding the existence of a weakly efficient point, respectively efficient point of a vector function $F$. Consider the mapping  $f:K\times K\To Z,\, f(x,y)=F(y)-F(x).$  It is obvious that $x_0\in K$ is a weak efficient point, respectively efficient point of $F$, if and only if, $x_0$ is a solution of the weak vector equilibrium problem $f(x_0,y)\not\in -\inte C,\,\forall y\in K,$ respectively $x_0$ is a solution of the strong vector equilibrium problem $f(x_0,y)\not\in-C\setminus\{0\},\,\forall y\in  K.$

Assume now that $K$ is a nonempty, convex and compact subset of $X$. Let $D\subseteq K$ be a
self segment-dense set.
It can easily be verified that in this case, the assumptions (i) in the hypothesis of Theorem \ref{t1} becomes
 $F$ is C-lower semicontinuous on $K.$ Then, since $K$ is compact, one can conclude that there exists a  weak  efficient point of $F,$ see \cite{DTL1}. Note that the condition (iv) in the hypothesis of Theorem \ref{t1} is automatically satisfied. Moreover, it is obvious that the condition (iii) in the hypothesis of Theorem \ref{t1},  which becomes that  $F$ is C-convex on $D,$ is superfluous. Nevertheless, this is not the case when $K$ is not a compact set, but a closed subset of a reflexive Banach space $X.$
In conclusion, according to the Theorem \ref{t13},  in reflexive Banach spaces the following result holds.

\begin{theorem} Let $X$ be a reflexive Banach space and let $Z$ be  a Hausdorff, locally convex topological vector space,  let $C\subseteq Z$ be a convex,  pointed cone with nonempty interior and let $K$ be a nonempty, convex and closed subset of $X$. Let $D\subseteq K$ be a
self segment-dense set in the weak topology of $X$, and consider the mapping $F :K  \longrightarrow{Z}$  satisfying

\begin{itemize}
\item[(i)] $F$ is  C-lower semicontinuous on $K,$
\item[(ii)] $F$ is  C-upper semicontinuous on $K\setminus D,$
\item[(iii)] $F$ is C-convex on $D,$
\item[(iv)] $\exists r>0$ such that, for all $x\in K$, $\|x\|\le r$, there exists $y_0\in D$ with $\|y_0\|<r,$ such that
 $F(y_0)-F(x)\in-\inte C\cup \{0\}.$
\end{itemize}
Then, there exists a weak  efficient point of $F.$
\end{theorem}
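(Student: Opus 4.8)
The plan is to reduce this vector optimization statement to the already-proved weak vector equilibrium result, Theorem \ref{t13}, by introducing the bifunction
\[
f:K\times K\To Z,\qquad f(x,y)=F(y)-F(x).
\]
As noted in the text preceding the statement, $x_0\in K$ is a weak efficient point of $F$ exactly when $f(x_0,y)\not\in-\inte C$ for all $y\in K$, so it suffices to verify that this $f$ satisfies the five hypotheses (i)--(v) of Theorem \ref{t13}. Once that verification is done, the conclusion is immediate.

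First I would translate each structural hypothesis on $F$ into the corresponding hypothesis on $f$. For hypothesis (i) of Theorem \ref{t13}, fix $y\in D$; then $x\mapsto f(x,y)=F(y)-F(x)$ is C-upper semicontinuous precisely because $F$ is C-lower semicontinuous (the map $-F$ is C-upper semicontinuous, and translating by the constant $F(y)$ preserves this), so assumption (i) on $F$ gives exactly what is needed. For hypothesis (ii), fix $x\in K$; then $y\mapsto f(x,y)=F(y)-F(x)$ is C-upper semicontinuous on $K\setminus D$ directly from the C-upper semicontinuity of $F$ on $K\setminus D$, since subtracting the constant $F(x)$ does not affect semicontinuity. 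For hypothesis (iii), fix $x\in K$; the map $y\mapsto f(x,y)=F(y)-F(x)$ is C-convex on $D$ as a consequence of $F$ being C-convex on $D$, again because adding the constant $-F(x)$ preserves C-convexity. For hypothesis (iv), the diagonal condition $f(x,x)=F(x)-F(x)=0\not\in-\inte C$ holds for every $x\in D$ (indeed for every $x\in K$), since $0\notin-\inte C$ as $C$ is pointed with $0$ on its boundary; this is automatic and requires no separate assumption.

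It remains to match the coercivity hypothesis. Assumption (iv) of the present theorem states that there is $r>0$ so that for every $x\in K$ with $\|x\|\le r$ there exists $y_0\in D$ with $\|y_0\|<r$ and $F(y_0)-F(x)\in-\inte C\cup\{0\}$; rewriting the displayed inclusion as $f(x,y_0)=F(y_0)-F(x)\in-\inte C\cup\{0\}$ is literally hypothesis (v) of Theorem \ref{t13}. With all five hypotheses verified, Theorem \ref{t13} yields some $x_0\in K$ with $f(x_0,y)\not\in-\inte C$ for all $y\in K$, i.e. $F(y)-F(x_0)\not\in-\inte C$ for all $y\in K$, which is exactly the assertion that $x_0$ is a weak efficient point of $F$.

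The proof is essentially a bookkeeping argument, so there is no deep obstacle; the only point that deserves care is the bidirectional translation between semicontinuity of $F$ and semicontinuity of $f$ in its two separate arguments. Specifically, one must be attentive that C-lower semicontinuity of $F$ corresponds to C-upper semicontinuity in the first variable $x$ (because of the minus sign on $F(x)$), whereas C-upper semicontinuity of $F$ corresponds to C-upper semicontinuity in the second variable $y$ (because $F(y)$ enters with a plus sign). Confusing these two would produce the wrong hypotheses, so I would state the two semicontinuity reductions explicitly rather than lumping them together.
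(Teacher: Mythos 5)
Your proposal is correct and follows exactly the paper's own route: the paper likewise proves this theorem by applying Theorem \ref{t13} to the bifunction $f(x,y)=F(y)-F(x)$, leaving the hypothesis verification implicit. Your explicit check of the five hypotheses (including the sign-sensitive point that C-lower semicontinuity of $F$ yields C-upper semicontinuity of $x\mapsto f(x,y)$) is exactly the bookkeeping the paper omits.
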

\begin{proof} The conclusion follows from  Theorem \ref{t13} applied to the mapping $f:K\times K\To Z,\, f(x,y)=F(y)-F(x).$
\end{proof}

We show next that assumption (iii) in the hypothesis of the  previous theorem, is essential.
\begin{example} Let $f:[0,\infty)\To \R,\, f(x)=\left\{\begin{array}{ll} 0,\mbox{ if } x\le 1\\-x+1,\mbox{ if }x>1.
\end{array}\right.$ Consider the convex cone $C=[0,\infty).$ Let $D$ be the set of non-negative rational numbers. Then obviously, (i) and (ii) are satisfied automatically and (iv) is satisfied with $r=1.$ It is also obvious that $f$ is not convex on $D$. Hence, (iii) fails, and also the conclusion of the previous theorem, since $f$ has no minima.
\end{example}

Similarly, Theorem \ref{t23}, applied to the bifunction $f:K\times K\To Z,\, f(x,y)=F(y)-F(x),$ assures the existence of an efficient point in reflexive Banach spaces. More precisely, the following result holds.

\begin{theorem} Let $X$ be a reflexive Banach space and let $Z$ be  a Hausdorff, locally convex topological vector space,  let $C\subseteq Z$ be a convex,  pointed cone with nonempty interior and let $K$ be a nonempty, convex and closed subset of $X$. Let $D\subseteq K$ be a
self segment-dense set in the weak topology of $X$, and consider the mapping $F :K  \longrightarrow{Z}$  satisfying

\begin{itemize}
\item[(i)] $F$ is strongly C-lower semicontinuous on $K,$
\item[(ii)] $F$ is strongly C-upper semicontinuous on $K\setminus D,$
\item[(iii)] $F$ is C-convex on $D,$
\item[(iv)] $\exists r>0$ such that, for all $x\in K$, $\|x\|\le r$, there exists $y_0\in D$ with $\|y_0\|<r,$ such that
$F(y_0)-F(x)\in -C$.
\end{itemize}
Then, there exists an efficient point of $F.$
\end{theorem}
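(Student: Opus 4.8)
The plan is to derive this result as a direct corollary of Theorem \ref{t23}, applied to the bifunction $f:K\times K\To Z$ defined by $f(x,y)=F(y)-F(x)$. As already noted in the discussion preceding the statement, $x_0\in K$ is an efficient point of $F$ if and only if $x_0$ solves the strong vector equilibrium problem $f(x_0,y)\not\in-C\setminus\{0\}$ for all $y\in K$; hence it suffices to check that this particular $f$ meets all five hypotheses (i)--(v) of Theorem \ref{t23}.

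The verification amounts to transporting each assumption on $F$ into the corresponding assumption on $f$. For hypothesis (i) of Theorem \ref{t23}, fix $y\in D$ and consider $x\longmapsto f(x,y)=-F(x)+F(y)$: since $F$ is strongly C-lower semicontinuous on $K$, the map $-F$ is strongly C-upper semicontinuous by definition, and adding the constant $F(y)$ leaves this property unchanged, so $x\longmapsto f(x,y)$ is strongly C-upper semicontinuous on $K$. For hypothesis (ii), fix $x\in K$ and consider $y\longmapsto f(x,y)=F(y)-F(x)$: subtracting the constant $F(x)$ preserves strong C-upper semicontinuity, so assumption (ii) on $F$ yields (ii) for $f$ on $K\setminus D$. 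For hypothesis (iii), the same constant-shift argument shows that C-convexity of $F$ on $D$ passes to $y\longmapsto f(x,y)$ on $D$. Hypothesis (iv) is automatic, since $f(x,x)=F(x)-F(x)=0$ and $0\notin-C\setminus\{0\}$. Finally, hypothesis (v) of Theorem \ref{t23} is literally assumption (iv) of the present theorem, written out for $f(x,y_0)=F(y_0)-F(x)$.

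With all hypotheses of Theorem \ref{t23} in force, that theorem produces $x_0\in K$ with $f(x_0,y)=F(y)-F(x_0)\not\in-C\setminus\{0\}$ for every $y\in K$, which is precisely the statement that $x_0$ is an efficient point of $F$. The only step needing genuine care --- rather than mere bookkeeping --- is the passage between semicontinuity notions for $F$ and for $f$: one must use that strong C-lower semicontinuity of $F$ is, by the very definition given in the Preliminaries, strong C-upper semicontinuity of $-F$, and that both of these properties are invariant under translation by a fixed vector (here $F(y)$, respectively $-F(x)$). Everything else is a direct transcription of the hypotheses, so I do not anticipate any substantive obstacle.
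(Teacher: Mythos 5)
Your proposal is correct and follows exactly the paper's own route: the paper likewise obtains this result by applying Theorem \ref{t23} to the bifunction $f(x,y)=F(y)-F(x)$, with your hypothesis-by-hypothesis verification (including the observation that strong C-lower semicontinuity of $F$ means strong C-upper semicontinuity of $-F$, and that translation by a constant preserves semicontinuity and C-convexity) being the routine bookkeeping the paper leaves implicit. The only cosmetic point is that the semicontinuity in (i)--(ii) should be understood in the weak topology of $X$, as required by Theorem \ref{t23}; the paper's statement is equally silent on this, so your reading matches its intent.
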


\subsection{Minty Type Vector Variational Inequalities}
In this section we give some new existence results for weak, respectively strong vector variational
inequalities of Minty type.
Let $X$ and $Z$ be  Hausdorff, locally convex topological vector spaces,  let $C\subseteq Z$ be a convex,  pointed cone  and
let $K$ be a nonempty subset of $X$. Let us denote $L(X,Z)$ the set of all linear and continuous operators
from $X$ to $Z$. Let $F:X\To L(X,Z).$ For $x^*\in L(X,Z)$ and $x\in X$, we denote by $\<x^*,x\>$ the vector $x^*(x)\in Z.$ The weak  vector variational inequality of Minty type reads

find $x_0\in K$ such that $\<F(y),y-x_0\>\not\in-\inte C,$ for all $y\in K$.
\\
Analogously, the strong  vector variational inequality of Minty type reads

find $x_0\in K$ such that $\<F(y),y-x_0\>\not\in- C\setminus\{0\},$ for all $y\in K$.

Note that if we take $f:K\times K\To Z,\,f(x,y)=\<F(y),y-x_0\>$ the weak, respectively strong vector variational inequality problem of Minty type becomes the appropriate vector equilibrium problem. In conclusion the following results hold.

\begin{theorem} Let $X$ and $Z$ be  Hausdorff, locally convex topological vector spaces,  let $C\subseteq Z$ be a convex,  pointed cone with nonempty interior and let $K$ be a nonempty, convex and compact subset of $X$. Let $D\subseteq K$ be a
self segment-dense set and consider the mapping $F:K\To L(X,Z)$ satisfying

\begin{itemize}
\item[(i)] $\forall x\in K$ the map $y\longrightarrow \<F(y),y-x\>$ is C-upper semicontinuous on $K\setminus D$,
\item[(ii)] $\forall x\in D,$ the mapping $y\longrightarrow \<F(y),y-x\>$ is C-convex on $%
D,$
\end{itemize}

Then, there exists an element $x_0\in K$ such that
\begin{equation*}
\<F(y),y-x_0\>\not\in -\inte C,\,\forall y\in K.
\end{equation*}
\end{theorem}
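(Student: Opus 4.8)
The plan is to reduce the statement to Theorem \ref{t1} via the bifunction indicated just before the theorem. First I would set $f:K\times K\To Z$, $f(x,y)=\<F(y),y-x\>$, and recall that $x_0\in K$ solves the weak vector variational inequality of Minty type exactly when $f(x_0,y)\not\in-\inte C$ for all $y\in K$. It then suffices to check that $f$ meets the four hypotheses of Theorem \ref{t1}, whose conclusion produces precisely such an $x_0$.

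The verification splits into four checks. For hypothesis (i) of Theorem \ref{t1}, I would fix $y\in D$; since $F(y)\in L(X,Z)$ is linear and continuous, the map $x\longrightarrow\<F(y),y-x\>$ is affine and continuous on $K$, and continuity implies C-upper semicontinuity (as noted in the Preliminaries), so (i) holds. Hypothesis (ii) of Theorem \ref{t1}, that $y\longrightarrow f(x,y)$ is C-upper semicontinuous on $K\setminus D$ for every $x\in K$, is exactly assumption (i) of the present theorem, and hypothesis (iii), the C-convexity of $y\longrightarrow f(x,y)$ on $D$ for every $x\in D$, is exactly assumption (ii). Note that here one genuinely exploits that in condition (i) the argument $y$ is frozen, so only the linear dependence on $x$ is seen.

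The sole point needing a small argument is hypothesis (iv), the diagonal condition $f(x,x)\not\in-\inte C$ for all $x\in D$. Here $f(x,x)=\<F(x),x-x\>=\<F(x),0\>=0$, so I must verify $0\not\in-\inte C$, equivalently $0\not\in\inte C$. This follows from the standing assumptions that $C$ is pointed and has nonempty interior: were $0$ an interior point of $C$, then $\inte C$ would contain a balanced neighborhood $V$ of the origin, whence $V\subseteq C$ and $-V=V\subseteq C$, so pointedness $C\cap(-C)=\{0\}$ would force $V=\{0\}$, contradicting that $V$ is a nonempty open set. Hence $0\not\in\inte C$ and (iv) holds for every $x\in K$, in particular on $D$.

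With all hypotheses of Theorem \ref{t1} in place, that theorem yields $x_0\in K$ with $f(x_0,y)\not\in-\inte C$ for all $y\in K$, that is, $\<F(y),y-x_0\>\not\in-\inte C$ for all $y\in K$, which is the desired conclusion. I do not anticipate a genuine obstacle: the argument is a direct reduction, the only nonroutine ingredient being the elementary fact that a pointed cone with nonempty interior cannot have the origin in its interior.
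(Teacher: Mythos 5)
Your proposal is correct and follows exactly the paper's route: the paper likewise proves this theorem by applying Theorem \ref{t1} to $f(x,y)=\langle F(y),y-x\rangle$, noting (in the remark that follows) that hypothesis (i) of Theorem \ref{t1} is automatic since $x\longmapsto\langle F(y),y-x\rangle$ is continuous, while hypotheses (ii) and (iii) are the stated assumptions. Your only addition is spelling out the diagonal condition $f(x,x)=0\not\in-\inte C$ via the fact that a pointed cone cannot contain $0$ in its interior, a detail the paper leaves implicit.
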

\begin{proof} The conclusion follows by Theorem \ref{t1} applied to the mapping $f:K\times K\To Z,\,f(x,y)=\<F(y),y-x_0\>$.
\end{proof}

\begin{remark}\label{difference} Note that the condition (i) of Theorem \ref{t1}, that is, $x\longrightarrow \<F(y),y-x\>$ is C-upper semicontinuous on $K$ for every $y\in D$ is automatically satisfied. However this is not the case for strongly C-upper semicontinuity. Indeed, let $X=Z=\R^2$ and
$$F:X\to L(X,Z),\, F(u,v)=\left(                                                                                                                 \begin{array}{cc}
u & 0 \\
 0 & v \\
 \end{array}
 \right).$$
 Then $\<F(u,v),(u,v)-(x,y)\>=(u(u-x),v(v-y)).$ Let $C$ be the non-negative orthant of $\R^2$, i.e. $C=\R^2_+.$
Let $K=[-1,1]\times[-1,1]$ and $D=\mathbb{Q}^2\cap K,$ where $\mathbb{Q}$ is the set of rational numbers. According to Example \ref{EEE}, $D$ is self-segment-dense in $K.$ We show that for $(u,v)=(-1,-1)\in D$ the map  $(x,y)\longrightarrow (u(u-x),v(v-y))$ is not strongly C-upper semicontinuous at $(x,y)=(0,1).$ Indeed, assume the contrary, that is, for all $(k,h)\in C\setminus \{(0,0)\}$ there exits $U$ a neighbourhood of $(0,1)$, such that for all $(s,t)\in U$ one has $(s+1,t+1)\in (1,2)+(k,h)-\R^2_+\{(0,0)\}.$ Obviously one can take $U=(-\e,\e)\times(1-\e,1+\e)$ for some $\e>0.$  Let $(k,h)=(0,1).$ Then, one must have $(s,t)\in (0,2)-\R^2_+\setminus\{(0,0)\}$ for all $(s,t)\in U$, which leads to contradiction if one takes $s>0.$
\end{remark}
In conclusion, as an application of Theorem \ref{t2}, for strong vector variational inequalities the following result holds.

\begin{theorem} Let $X$ and $Z$ be  Hausdorff, locally convex topological vector spaces,  let $C\subseteq Z$ be a convex,  pointed cone  and
let $K$ be a nonempty, convex and compact subset of $X$. Let $D\subseteq K$ be a
self segment-dense set and consider the mapping $F:K\To L(X,Z)$ satisfying

\begin{itemize}
\item[(i)] $\forall y\in D$ the mapping $x\longrightarrow \<F(y),y-x\>$ is
strongly C-upper semicontinuous on $K,$

\item[(ii)] $\forall x\in K,\, y\longrightarrow \<F(y),y-x\>$ is strongly C-upper
semicontinuous on $K\setminus D$,

\item[(iii)] $\forall x\in D,$ the mapping $y\longrightarrow \<F(y),y-x\>$ is C-convex on $%
D$
\end{itemize}

Then, there exists an element $x_0\in K$ such that
\begin{equation*}
\<F(y),y-x_0\>\not\in -C\setminus\{0\},\,\forall y\in K.
\end{equation*}
\end{theorem}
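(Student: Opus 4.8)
The plan is to reduce the statement to the strong vector equilibrium result already established in Theorem \ref{t2}, exactly paralleling the way the weak Minty inequality was obtained from Theorem \ref{t1}. First I would introduce the bifunction $f:K\times K\To Z$ defined by $f(x,y)=\<F(y),y-x\>$, so that the desired conclusion $\<F(y),y-x_0\>\not\in-C\setminus\{0\}$ for all $y\in K$ becomes precisely the assertion that $x_0$ solves the strong vector equilibrium problem $f(x_0,y)\not\in-C\setminus\{0\}$, $\forall y\in K$, associated with $f$.

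Next I would verify that the four hypotheses of Theorem \ref{t2} hold for this $f$. Assumptions (i), (ii) and (iii) of that theorem translate verbatim into the present hypotheses (i), (ii), (iii): for each fixed $y\in D$ the map $x\mapsto f(x,y)$ is exactly $x\mapsto\<F(y),y-x\>$, which is assumed strongly $C$-upper semicontinuous on $K$; for each fixed $x\in K$ the map $y\mapsto f(x,y)$ is $y\mapsto\<F(y),y-x\>$, assumed strongly $C$-upper semicontinuous on $K\setminus D$; and for each fixed $x\in D$ the map $y\mapsto f(x,y)$ is assumed $C$-convex on $D$. The one condition of Theorem \ref{t2} not appearing explicitly among the present hypotheses is the diagonal condition (iv), namely $f(x,x)\not\in-C\setminus\{0\}$ for all $x\in D$.

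This last point is where the argument is essentially free: for every $x\in K$ one computes $f(x,x)=\<F(x),x-x\>=\<F(x),0\>=0$, and since $0\not\in-C\setminus\{0\}$ by the very definition of the set $-C\setminus\{0\}$, hypothesis (iv) is satisfied automatically (indeed for all $x\in K$, a fortiori on $D$). With all hypotheses of Theorem \ref{t2} in force, that theorem produces an element $x_0\in K$ with $f(x_0,y)\not\in-C\setminus\{0\}$ for all $y\in K$, which is exactly the sought solution of the strong Minty vector variational inequality.

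Since the reduction is direct and each hypothesis of Theorem \ref{t2} is matched immediately, I do not anticipate any genuine obstacle. The only subtlety worth flagging explicitly is the automatic verification of the diagonal condition, which rests on the linearity of $\<F(x),\cdot\>$ giving $\<F(x),0\>=0$ together with the exclusion of $0$ from $-C\setminus\{0\}$; this is precisely what makes the single structural difference from the equilibrium setting harmless here.
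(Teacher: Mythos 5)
Your proposal is correct and is exactly the paper's argument: the paper obtains this theorem by applying Theorem \ref{t2} to the bifunction $f(x,y)=\<F(y),y-x\>$, with hypotheses (i)--(iii) transferring verbatim and the diagonal condition (iv) holding automatically because $f(x,x)=\<F(x),0\>=0\not\in-C\setminus\{0\}$. Your write-up in fact makes explicit the verification of (iv) that the paper leaves implicit, which is a welcome detail rather than a deviation.
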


\begin{remark} One can use Theorem \ref{t13}, respective Theorem \ref{t23} to obtain some sufficient conditions that ensure the solution existence of weak vector variational inequalities of Minty type, respectively strong vector variational inequalities of Minty type, in reflexive Banach spaces.
\end{remark}


{\bf Acknowledgements} This work was supported by a grant of the Romanian Ministry of Education, CNCS - UEFISCDI, project number PN-II-RU-PD-2012-3 -0166.

\end{document}